\theoremstyle{plain}
\newtheorem{thm}{Theorem}[section]
\newtheorem{claim}[thm]{Claim}
\newtheorem{corollary}[thm]{Corollary}
\newtheorem{lemma}[thm]{Lemma}
\newtheorem{theorem}[thm]{Theorem}
\theoremstyle{definition}
\newtheorem{definition}[thm]{Definition}
\newtheorem*{notation*}{Notation and Terminology}
\newtheorem{remark}[thm]{Remark}
\theoremstyle{remark}
\numberwithin{equation}{section}
\DeclareMathOperator{\Aut}{Aut}
\DeclareMathOperator{\GL}{GL}
\newcommand{\id}{\operatorname{id}}
\newcommand{\Ker}{\operatorname{Ker}}
\newcommand{\PGL}{\operatorname{PGL}}
\DeclareMathOperator{\Sing}{Sing}
\DeclareMathOperator{\inv}{inv}
\DeclarePairedDelimiter{\abs}{\lvert}{\rvert}
\newcommand{\Rmnum}[1]{\MakeUppercase{\romannumeral #1}}
\title[Equivariant K\"ahler model]
{Equivariant K\"ahler model for Fujiki's class}
\author{Jia Jia}
\author{Sheng Meng}
\address{
	\textsc{National University of Singapore,
		Singapore 119076, Republic of Singapore}
}
\email{jia\_jia@nus.edu.sg}
\address{
	\textsc{Korea Institute For Advanced Study,
		Seoul 02455, Republic of Korea}
}
\email{ms@nus.edu.sg, shengmeng@kias.re.kr}
\subjclass[2020]{
	14J50, 
	32J27, 
	32M05. 
}
\keywords{Fujiki's class $\mathcal{C}$, K\"{a}hler manifold, automorphism group, Lie group, Jordan property}
\begin{document}

\begin{abstract}
	Let $X$ be a compact complex manifold in Fujiki's class $\mathcal{C}$,
	i.e., admitting a big $(1,1)$-class $[\alpha]$.
	Consider $\Aut(X)$ the group of biholomorphic automorphisms
	and $\Aut_{[\alpha]}(X)$ the subgroup of automorphisms preserving the class $[\alpha]$ via pullback.
	We show that $X$ admits an $\Aut_{[\alpha]}(X)$-equivariant K\"{a}hler model:
	there is a bimeromorphic holomorphic map $\sigma \colon \widetilde{X}\to X$
	from a K\"{a}hler manifold $\widetilde{X}$ such that $\Aut_{[\alpha]}(X)$ lifts holomorphically via $\sigma$.

	There are several applications.
	We show that $\Aut_{[\alpha]}(X)$ is a Lie group with only finitely many components.
	This generalizes an early result of Lieberman and Fujiki on the K\"{a}hler case.
	We also show that every torsion subgroup of $\Aut(X)$ is almost abelian,
	and $\Aut(X)$ is finite if it is a torsion group.
\end{abstract}

\maketitle
\setcounter{tocdepth}{1}
\tableofcontents

\section{Introduction}

Let $X$ be a compact complex manifold in \emph{Fujiki's class $\mathcal{C}$},
i.e., one of the following three equivalent assumptions is satisfied:
\begin{enumerate}
	\item $X$ is the meromorphic image of a compact K\"ahler manifold;
	\item $X$ is bimeromorphic to a compact K\"ahler manifold;
	\item $X$ admits a big $(1,1)$-class $[\alpha]$.
\end{enumerate}

We refer to \cite{fujiki1978automorphism}*{Definition~1.1 and Lemma~1.1},
\cite{varouchas1989kahler}*{Chapter~\Rmnum{4}, Theorem~5}
and \cite{demailly2004numerical}*{Theorem~0.7}
for the equivalence and some properties of Fujiki's class $\mathcal{C}$.

It is a natural question that can we study the group of biholomorphic automorphisms $\Aut(X)$
via some well-chosen K\"ahler model of $X$?
So this requires us to trace back to the construction of the K\"ahler model.
Indeed, Demailly and Paun showed in the proof of \cite{demailly2004numerical}*{Theorem~3.4} that
if a compact complex manifold $X$ admits a big $(1,1)$-class $[\alpha]$,
then there is a bimeromorphic holomorphic map $\sigma\colon X'\to X$ from a K\"ahler manifold $X'$
obtained by a sequence of blowups along smooth centres
determined by the ideal sheaf $\mathcal{J}$ corresponding to some
K\"ahler current $T$ with analytic singularities (cf.~\cref{def-sing}) in $[\alpha]$.
In general, $\Aut(X)$ does not lift via $\sigma$,
for the first very simple reason that the class $[\alpha]$ may not be preserved by $\Aut(X)$.
Then we focus ourselves on the subgroup
\[
	\Aut_{[\alpha]}(X)\coloneqq \{g\in \Aut(X)\mid g^*[\alpha]=[\alpha]\}.
\]
However, one still cannot expect the lifting of $\Aut_{[\alpha]}(X)$,
for the second reason that the blown-up ideal sheaf $\mathcal{J}$ is not $\Aut_{[\alpha]}(X)$-invariant.
Therefore, we need to find another K\"ahler model in a more natural way.
Our idea is to consider the ideal sheaf generated by $g^*\mathcal{J}$ for all $g\in \Aut_{[\alpha]}(X)$.

The following is our main result.
Note that the lift action of $\Aut_{[\alpha]}(X)$ on $\widetilde{X}$
is given by $g(-)=(\sigma^{-1}\circ g\circ\sigma)(-)$.
By a holomorphic lifting we mean that the lift action is also holomorphic,
i.e., the induced map $\Aut_{[\alpha]}(X)\times \widetilde{X}\to \widetilde{X}$
is holomorphic.

\begin{theorem}\label{main-thm}
	Let $X$ be a compact complex manifold in Fujiki's class $\mathcal{C}$.
	For any big $(1,1)$-class $[\alpha]$ on $X$,
	there exists a bimeromorphic holomorphic map $\sigma\colon\widetilde{X}\to X$
	from a K\"ahler manifold $\widetilde{X}$
	such that $\Aut_{[\alpha]}(X)$ lifts holomorphically via $\sigma$.
\end{theorem}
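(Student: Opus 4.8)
The plan is to follow the strategy announced in the introduction: replace the Demailly--Paun ideal sheaf $\mathcal{J}$ by the ideal sheaf it generates under the action of $G\coloneqq\Aut_{[\alpha]}(X)$, blow this up and resolve, and exhibit an invariant K\"ahler current controlling the singularities. By the theorem of Bochner--Montgomery, $\Aut(X)$ is a complex Lie group acting holomorphically on the compact manifold $X$, so $G$, being the stabilizer of $[\alpha]$ for the induced action on $\operatorname{H}^2(X,\R)$, is a closed Lie subgroup; in particular it has at most countably many connected components. Starting from the Demailly--Paun K\"ahler current $T=\theta+dd^c\varphi\in[\alpha]$ with analytic singularities along the coherent ideal sheaf $\mathcal{J}$ (so that $T\ge\delta\omega_0$ for some Hermitian form $\omega_0$ and $\delta>0$), I would set $\mathcal{I}\coloneqq\sum_{g\in G}g^*\mathcal{J}$. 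Since $(g\circ h)^*=h^*\circ g^*$, reindexing $g'=g\circ h$ shows $h^*\mathcal{I}=\mathcal{I}$ for every $h\in G$, so $\mathcal{I}$ is $G$-invariant by construction.

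The first---and I expect the main---task is to prove that $\mathcal{I}$ is coherent, and in fact that $\mathcal{I}=\sum_{g\in F}g^*\mathcal{J}$ for some \emph{finite} subset $F\subseteq G$. At each point the stalk of $\mathcal{I}$ is a finite subsum by the Noetherian property of $\OO_{X,x}$; the difficulty is to make this finiteness uniform, i.e. to rule out that infinitely many translates $g^*\mathcal{J}$ keep enlarging the sum as one moves over the compact manifold. I would phrase this as a stabilization statement for the increasing directed system of coherent ideals $\mathcal{I}_F\coloneqq\sum_{g\in F}g^*\mathcal{J}$ ($F$ finite): the associated decreasing family of subschemes $V(\mathcal{I}_F)$ of the compact space $X$ must stabilize. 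To prove stabilization I would exploit the Lie structure of $G$, treating the identity component through the holomorphic action map $a\colon G^0\times X\to X$ and the coherent pullback $a^*\mathcal{J}$ on $G^0\times X$, so that the contribution of an entire connected component is governed by a single coherent sheaf and is hence already finitely generated; the countably many components are then absorbed by the same compactness and Noetherianity bookkeeping. This is the step most likely to require genuine care.

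Granting finiteness, the K\"ahler current is easy to produce, and here I would use that $X$, lying in class $\mathcal{C}$, satisfies the $\partial\bar\partial$-lemma. Since $g^*[\alpha]=[\alpha]$, each pulled-back current can be written with the common smooth reference form $\theta$ as $g^*T=\theta+dd^c\Phi_g$, where $\Phi_g$ is $\theta$-psh with analytic singularities along $g^*\mathcal{J}$ and $g^*T\ge\delta_g\,\omega_0$ for some $\delta_g>0$. Taking the finite upper envelope $\Psi\coloneqq\max_{g\in F}\Phi_g$ gives a $\theta$-psh function with $\theta+dd^c\Psi\ge(\min_{g\in F}\delta_g)\,\omega_0>0$, the positivity of the bound being exactly where finiteness of $F$ is used. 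Locally $\Psi$ agrees up to a bounded term with $\tfrac{c}{2}\log\sum_{g\in F}\sum_j|f_j^{(g)}|^2$, where the $f_j^{(g)}$ generate $g^*\mathcal{J}$, so $\widetilde T\coloneqq\theta+dd^c\Psi$ is a K\"ahler current in $[\alpha]$ with analytic singularities precisely along $\mathcal{I}$.

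Finally I would build $\widetilde X$ and lift the action. Applying the Demailly--Paun construction to $\widetilde T$---equivalently, blowing up the $G$-invariant ideal $\mathcal{I}$ and resolving---produces a bimeromorphic holomorphic map from a K\"ahler manifold. For the lift: because $\mathcal{I}$ is $G$-invariant, the pullback of $\mathcal{I}$ to the blowup of $\mathcal{I}$ is invertible and $G$-invariant, so the universal property of the blowup lets every $g\in G$ lift to an automorphism commuting with the projection; invoking a functorial (canonical) resolution of singularities then carries the $G$-action to the smooth model $\widetilde X$, since functoriality renders the resolution equivariant. Holomorphy of the global action $G\times\widetilde X\to\widetilde X$ I would obtain by performing the same blow-up and resolution in a family over $G$: the identity $a^*\mathcal{I}=\operatorname{pr}_X^*\mathcal{I}$ on $G\times X$, which follows from the fiberwise invariance, makes the universal lift of $a$ a holomorphic map that restricts to the prescribed action $g\mapsto\sigma^{-1}\circ g\circ\sigma$ on the dense locus. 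Beyond the coherence step, the only points needing attention are the compatibility of the functorial resolution with the family and with the $G$-action, both of which are formal consequences of naturality.
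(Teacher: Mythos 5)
Your overall strategy is the same as the paper's: form the orbit ideal $\mathcal{I}=\sum_{g\in\Aut_{[\alpha]}(X)}g^*\mathcal{J}$, principalize it $\Aut_{[\alpha]}(X)$-equivariantly via Bierstone--Milman (the centres being canonically attached to the invariant ideal), and check K\"ahlerness of the resulting model; the architecture is sound. The one point of real divergence is the step you single out as the crux, and there your proposed remedy is the wrong tool. The finiteness $\mathcal{I}=\sum_{g\in F}g^*\mathcal{J}$ for some finite $F$ (hence coherence) is an immediate consequence of the strong Noetherian property of coherent sheaves on a compact space (Demailly, \emph{Complex Analytic and Differential Geometry}, Ch.~II, Property~(3.22)): the directed increasing family of coherent subsheaves $\mathcal{I}_F\subseteq\mathcal{O}_X$ over finite $F$ must be stationary, and this one-line citation is all the paper uses. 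By contrast, your Lie-theoretic sketch does not obviously close: the coherence of $a^*\mathcal{J}$ on $G^0\times X$ does not by itself bound the ideal $\sum_{g\in G^0}g^*\mathcal{J}$ on $X$, since $G^0$ is non-compact and there is no properness with which to push finite generation down to $X$; making that idea rigorous would amount to re-proving the strong Noetherian statement. The remaining differences are harmless variants. For K\"ahlerness you combine the finitely many translates into one current $\theta+dd^c\max_{g\in F}\Phi_g$ and quote the Demailly--Paun computation on the principalization; note that $\max_g\Phi_g$ agrees with $c\log\sum_{g,j}|f_j^{(g)}|^2$ only up to a \emph{bounded} (not smooth) error, so one must either check that the blow-up computation tolerates bounded error terms or, as the paper does, keep the $m$ translated currents separate, observe that each descends to a K\"ahler current on $\widetilde X$ in the class $[\sigma^*\alpha-cD-\epsilon u_E]$, and conclude via Boucksom's criterion that the non-K\"ahler locus of that class is contained in the (empty) intersection of their singular loci. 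Finally, for holomorphy of the lifted action the paper's proper-transform-of-the-graph lemma is cleaner than your family blow-up, whose key identity $a^*\mathcal{I}=\operatorname{pr}_X^*\mathcal{I}$ requires more justification than ``fiberwise invariance''; both routes are, however, standard.
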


\begin{remark}
	The bimeromorphic holomorphic map $\sigma$ in \cref{main-thm} is indeed obtained by
	a sequence of $\Aut_{[\alpha]}(X)$-equivariant blowups along smooth centres.
\end{remark}

Let $X$ be a compact complex manifold.
It is known that $\Aut(X)$ has the natural structure of a complex Lie group
acting biholomorphically on $X$ (cf.~\cite{douady1966probleme}).
Denote by $\Aut_0(X)$ the connected component of $\Aut(X)$ containing the identity.
Since it is connected, the pullback action of $\Aut_0(X)$ on the (discrete) lattice $H^2(X,\mathbb{Z})$ is trivial.
When the $\partial\overline{\partial}$-lemma holds on $X$,
we have the Hodge decomposition that $H^{1,1}(X,\mathbb{R})$ is a subspace of $H^2(X,\mathbb{R})$
and then $\Aut_0(X)$ acts also trivially on $H^{1,1}(X,\mathbb{R})$.
Note that $\partial\overline{\partial}$-lemma holds when $X$ admits a big $(1,1)$-class.
So in this paper, our $\Aut_0(X)$ is always a subgroup of $\Aut_{[\alpha]}(X)$ for any big $(1,1)$-class $[\alpha] \in H^{1,1}(X,\mathbb{R})$.
We refer to \cite{deligne1975real}*{Lemma~(5.15) and Proposition~(5.17)} and \cite{fujiki1978automorphism}*{Proposition~1.6 and Corollary~1.7} for the details.

When $X$ is a K\"ahler manifold with a K\"ahler form $\alpha$,
Lieberman (cf.~\cite{lieberman1978compactness}*{Proposition~2.2}) and Fujiki (cf.~\cite{fujiki1978automorphism}*{Theorem~4.8}), separately, proved that
\[
	[\Aut_{[\alpha]}(X):\Aut_0(X)]<\infty.
\]
Their proof heavily relies on the K\"ahler form $\alpha$ (or at least the existence of a K\"ahler form).

Nevertheless, with the help of our \cref{main-thm}, we can show the following result.

\begin{corollary}\label{main-cor-components}
	Let $X$ be a compact complex manifold (in Fujiki's class $\mathcal{C}$).
	Then
	\[
		[\Aut_{[\alpha]}(X):\Aut_0(X)]<\infty
	\]
	for any big $(1,1)$-class $[\alpha]$ on $X$.
\end{corollary}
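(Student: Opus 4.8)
The plan is to transport the Kähler statement of Lieberman and Fujiki across the equivariant Kähler model supplied by \cref{main-thm}. Fix a big $(1,1)$-class $[\alpha]$ and let $\sigma\colon\widetilde X\to X$ be the bimeromorphic holomorphic map of \cref{main-thm}, with $\widetilde X$ Kähler. The holomorphic lift gives an injective homomorphism $\iota\colon\Aut_{[\alpha]}(X)\hookrightarrow\Aut(\widetilde X)$, $g\mapsto\tilde g=\sigma^{-1}\circ g\circ\sigma$, with image $G\coloneqq\iota(\Aut_{[\alpha]}(X))$. Since $g^*[\alpha]=[\alpha]$ we get $\tilde g^*\sigma^*[\alpha]=\sigma^*[\alpha]$, so $G$ fixes the class $\sigma^*[\alpha]$. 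Because indices are preserved by injective homomorphisms, it suffices to exhibit a subgroup of $\Aut(\widetilde X)$ containing $G$ to which Lieberman--Fujiki applies, and then to match up the identity components on the two sides.

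First I would produce a $G$-invariant Kähler class on $\widetilde X$. By the remark following \cref{main-thm}, $\sigma$ is a composition of $\Aut_{[\alpha]}(X)$-equivariant blowups along smooth centres; let $E_1,\dots,E_r$ be its exceptional prime divisors. Each $\tilde g\in G$ maps $\sigma$-fibres to $\sigma$-fibres, since $\sigma\circ\tilde g=g\circ\sigma$, hence preserves $\Exc(\sigma)=\bigcup_i E_i$ and permutes the $E_i$, giving a homomorphism $G\to\mathfrak{S}_r$ with necessarily finite image. The Demailly--Paun construction underlying \cref{main-thm} provides coefficients $a_i>0$ with $\sigma^*[\alpha]-\sum_i a_i[E_i]$ Kähler; the set of such coefficient vectors in $\mathbb R^r$ is open, convex, and invariant under the finite permutation action, so averaging any one point over that finite group yields an invariant vector $(a_i)$. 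Then $[\tilde\omega]\coloneqq\sigma^*[\alpha]-\sum_i a_i[E_i]$ is a Kähler class fixed by all of $G$, i.e.\ $G\subseteq\Aut_{[\tilde\omega]}(\widetilde X)$.

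Next I would invoke the Kähler case on $\widetilde X$: by \cite{lieberman1978compactness} and \cite{fujiki1978automorphism}, $[\Aut_{[\tilde\omega]}(\widetilde X):\Aut_0(\widetilde X)]<\infty$. Since $\Aut_0(\widetilde X)$ fixes $[\tilde\omega]$ it lies in $\Aut_{[\tilde\omega]}(\widetilde X)$, and the second isomorphism theorem gives
\[
[G:G\cap\Aut_0(\widetilde X)]=[G\cdot\Aut_0(\widetilde X):\Aut_0(\widetilde X)]\le[\Aut_{[\tilde\omega]}(\widetilde X):\Aut_0(\widetilde X)]<\infty.
\]
Pulling back through the injection $\iota$, the subgroup $K\coloneqq\iota^{-1}(\Aut_0(\widetilde X))$ satisfies $[\Aut_{[\alpha]}(X):K]=[G:G\cap\Aut_0(\widetilde X)]<\infty$. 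As $\iota$ is holomorphic, hence continuous, $\iota(\Aut_0(X))$ is connected and lies in $\Aut_0(\widetilde X)$, so $\Aut_0(X)\subseteq K$; it therefore remains to prove the reverse inclusion $K\subseteq\Aut_0(X)$.

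The main obstacle is precisely this last identification $K=\Aut_0(X)$, i.e.\ that $\tilde g\in\Aut_0(\widetilde X)$ forces $g\in\Aut_0(X)$; this is where the bimeromorphic change of model must be controlled. I would argue that every element of $\Aut_0(\widetilde X)$ already descends to $X$ and lands in $\Aut_0(X)$. Indeed, since $\widetilde X$ carries a big class the $\partial\overline{\partial}$-lemma holds and $\Aut_0(\widetilde X)$ acts trivially on $H^2(\widetilde X,\mathbb Z)$, hence fixes each class $[E_i]$; as the $E_i$ are rigid exceptional divisors, a connected group of automorphisms must preserve each $E_i$, and therefore preserves the open set $\widetilde X\setminus\Exc(\sigma)\cong X\setminus Z$, where $Z=\sigma(\Exc(\sigma))$ has codimension $\ge 2$. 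A biholomorphism of $X\setminus Z$ extends across $Z$ by the codimension-two removable singularity theorem, giving a continuous homomorphism $\sigma_*\colon\Aut_0(\widetilde X)\to\Aut(X)$; its image is connected and contains the identity, so $\sigma_*(\Aut_0(\widetilde X))\subseteq\Aut_0(X)$. For $g\in K$ we have $\tilde g\in\Aut_0(\widetilde X)$ and $\sigma_*(\tilde g)=g$, whence $g\in\Aut_0(X)$. Combining, $[\Aut_{[\alpha]}(X):\Aut_0(X)]=[\Aut_{[\alpha]}(X):K]<\infty$, as desired. The delicate points to check carefully are the preservation of each exceptional component by $\Aut_0(\widetilde X)$ and the holomorphy, hence continuity, of the descent map $\sigma_*$.
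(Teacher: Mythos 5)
Your architecture is the same as the paper's---lift $\Aut_{[\alpha]}(X)$ to the equivariant K\"ahler model, apply a Lieberman--Fujiki type finiteness there, and match identity components---but two of your supporting steps contain genuine gaps. The first is the assertion that the construction behind \cref{main-thm} yields coefficients $a_i>0$ with $\sigma^*[\alpha]-\sum_i a_i[E_i]$ K\"ahler, where the $E_i$ are the $\sigma$-exceptional prime divisors. This is false when $[\alpha]$ is big but not nef: if $C\subset X$ is a curve with $[\alpha]\cdot C<0$ (e.g.\ $[\alpha]=[L]+2[F]$ on the blowup of $\mathbb{P}^2$ at a point, $C=F$), its strict transform $\widetilde{C}$ is not contained in any $E_i$, so $\bigl(\sigma^*[\alpha]-\sum_i a_i[E_i]\bigr)\cdot\widetilde{C}\le[\alpha]\cdot C<0$ and no such class can be K\"ahler. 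The class actually produced in the proof of \cref{main-thm} is $[\sigma^*\alpha-cD-\epsilon u_E]$, where $D$ is cut out by $\sigma^{-1}\mathcal{J}\cdot\mathcal{O}_{\widetilde{X}}$ and in general contains non-exceptional components (strict transforms of divisorial parts of $V(\mathcal{J})$). Your averaging is repairable by including those components, but it is also unnecessary: since $\mathcal{J}$ is $\Aut_{[\alpha]}(X)$-invariant, $[D]$ and $[E]$ are already fixed by the lifted group $G$, so the class is $G$-invariant as it stands; and the paper sidesteps the issue entirely by quoting the Dinh--Hu--Zhang version of Lieberman--Fujiki (\cref{thm-dhz}), which only requires $G$ to preserve the \emph{big} class $\sigma^*[\alpha]$ on the K\"ahler manifold $\widetilde{X}$.

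The second gap is the descent step. There is no codimension-two removable singularity theorem for maps into a compact complex manifold (the second Riemann extension theorem is a statement about functions): a biholomorphism of $X\setminus Z$ with $\operatorname{codim}Z\ge 2$ need not extend holomorphically to $X$---composing with a flop in dimension $3$ gives self-bimeromorphisms that are isomorphisms in codimension one but not morphisms. Knowing that $\tilde g\in\Aut_0(\widetilde{X})$ preserves each $E_i$, hence $\widetilde{X}\setminus\Exc(\sigma)$, does not by itself produce a holomorphic $\sigma_*(\tilde g)$; the cluster set of $\sigma\circ\tilde g\circ\sigma^{-1}$ over a point of $Z$ is $\sigma(\tilde g(\sigma^{-1}(z)))$, which is positive-dimensional unless $\tilde g$ maps $\sigma$-fibres to $\sigma$-fibres. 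That fibre-preservation is exactly what the rigidity (Blanchard-type) lemma for connected group actions gives, and it is how the paper proceeds: \cref{lem-rig} descends $\Aut_0(\widetilde{X})$ injectively into $\Aut_0(X)$, and \cref{thm-aut0} upgrades this to an isomorphism $\Aut_0(\widetilde{X})\cong\Aut_0(X)$ with $\Aut_0(\widetilde{X})\le G$, which is precisely your desired identification $K=\Aut_0(X)$. With those two repairs your proof closes up and coincides in substance with the paper's.
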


We also give applications on torsion group actions.
When $X$ is a projective variety defined over any field $k$ of characteristic $0$,
Javanpeykar \cite{javanpeykar2021arithmetic}*{Theorem~1.4} showed that the group of $k$-automorphisms $\Aut_k(X)$ is finite if it is torsion.
We show that the same result holds true for normal compact complex spaces in Fujiki's class $\mathcal{C}$.

\begin{corollary}\label{main-cor-torsion}
	Let $X$ be a normal compact complex space in Fujiki's class $\mathcal{C}$.
	Then $\Aut(X)/\Aut_0(X)$ has bounded torsion subgroups,
	i.e., there is a constant $C$ such that $|G|\leq C$ for any torsion subgroup $G\leq \Aut(X)/\Aut_0(X)$.
	In particular, $\Aut(X)$ is a torsion group if and only if it is finite.
\end{corollary}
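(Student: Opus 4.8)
The plan is to reduce to the smooth case and then combine the finiteness of $\Aut_{[\beta]}(Y)$ modulo $\Aut_0(Y)$ coming from \cref{main-cor-components} with Minkowski's theorem on finite subgroups of $\GL_N(\Z)$.

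First I would pass to an $\Aut(X)$-equivariant resolution of singularities $\pi\colon Y\to X$, with $Y$ a compact complex manifold; a functorial resolution exists for the reduced space $X$, and since $X$ lies in Fujiki's class $\mathcal{C}$ and this class is a bimeromorphic invariant, $Y$ lies in $\mathcal{C}$ as well. Equivariance yields an injective homomorphism of complex Lie groups $\iota\colon\Aut(X)\hookrightarrow\Aut(Y)$, injectivity holding because two automorphisms agreeing on the dense open locus where $\pi$ is biholomorphic coincide. The key technical point of the reduction is that $\iota$ identifies the identity components, i.e. $\iota^{-1}(\Aut_0(Y))=\Aut_0(X)$. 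I would verify this on Lie algebras: as $X$ is normal, $\Sing X$ has codimension $\ge 2$ and $\Theta_X=(\Omega_X^1)^\vee$ is reflexive, so restriction to $X\setminus\Sing X\simeq Y\setminus\Exc(\pi)$ followed by extension across $\Sing X$ shows that pushforward of vector fields is an isomorphism $H^0(Y,\Theta_Y)\xrightarrow{\sim}H^0(X,\Theta_X)$ inverse to $d\iota$. Hence $d\iota$ is an isomorphism, $\iota(\Aut_0(X))$ is an open subgroup of the connected group $\Aut_0(Y)$ and therefore equals it; with injectivity this gives $\iota^{-1}(\Aut_0(Y))=\Aut_0(X)$.

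Next, fix a big $(1,1)$-class $[\beta]$ on $Y$ and consider the pullback representation on second cohomology $\rho\colon\Aut(Y)\to\GL(H^2(Y,\Z)_{\mathrm{free}})\simeq\GL_N(\Z)$. Any $g\in\ker\rho$ acts trivially on $H^2(Y,\R)\supseteq H^{1,1}(Y,\R)$ and so fixes $[\beta]$, giving $\ker\rho\subseteq\Aut_{[\beta]}(Y)$; by \cref{main-cor-components} the quotient $\Aut_{[\beta]}(Y)/\Aut_0(Y)$ is finite, whence $F\coloneqq\ker\rho/\Aut_0(Y)$ is finite. Since $\iota(\Aut_0(X))=\Aut_0(Y)\subseteq\ker\rho$, the composite $\rho\circ\iota$ descends to $\bar\rho\colon\Aut(X)/\Aut_0(X)\to\GL_N(\Z)$. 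Its kernel is $\iota^{-1}(\ker\rho)/\Aut_0(X)$, and the assignment $g\Aut_0(X)\mapsto\iota(g)\Aut_0(Y)$ embeds $\ker\bar\rho$ into $F$, the kernel of this assignment being $\iota^{-1}(\Aut_0(Y))/\Aut_0(X)$, which is trivial by the previous paragraph; hence $\ker\bar\rho$ is finite.

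Finally, let $\bar G\le\Aut(X)/\Aut_0(X)$ be any torsion subgroup. Then $\bar\rho(\bar G)$ is a torsion subgroup of $\GL_N(\Z)$; since torsion linear groups in characteristic $0$ are locally finite (Schur) and finite subgroups of $\GL_N(\Z)$ have order bounded by a constant $M(N)$ (Minkowski), $\bar\rho(\bar G)$ is finite with $|\bar\rho(\bar G)|\le M(N)$. As $\bar G\cap\ker\bar\rho\le\ker\bar\rho$ is finite, $|\bar G|=|\bar\rho(\bar G)|\cdot|\bar G\cap\ker\bar\rho|\le M(N)\cdot|\ker\bar\rho|\eqqcolon C$, independent of $\bar G$, proving boundedness. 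For the last assertion, if $\Aut(X)$ is a torsion group then so is the connected complex Lie group $\Aut_0(X)$; but a nontrivial connected Lie group contains a one-parameter subgroup isomorphic to $\R$ or $S^1$, hence an element of infinite order, forcing $\Aut_0(X)=\{1\}$. Then $\Aut(X)=\Aut(X)/\Aut_0(X)$ is a torsion subgroup of itself, so $|\Aut(X)|\le C<\infty$, while the converse is immediate. I expect the main obstacle to be the singular-to-smooth reduction, precisely the matching of identity components $\iota^{-1}(\Aut_0(Y))=\Aut_0(X)$ via the reflexive tangent sheaf, since the cohomological boundedness argument is then standard once \cref{main-cor-components} is available.
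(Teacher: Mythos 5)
Your proposal is correct and follows essentially the same route as the paper: reduce to the smooth case via an $\Aut(X)$-equivariant resolution (using normality to match identity components, where the paper cites Fujiki's Lemma~2.5 and you argue via reflexive vector fields), then split $\Aut(X)/\Aut_0(X)$ along the pullback representation on $H^2$, bounding the image by Minkowski--Schur and the kernel by \cref{main-cor-components}, and finally kill $\Aut_0(X)$ in the torsion case because a torsion connected Lie group is trivial. The only cosmetic differences are that the paper works with $\GL(H^2(X,\mathbb{Q}))$ rather than $\GL_N(\mathbb{Z})$ and proves the triviality of torsion connected Lie groups by a longer adjoint-representation argument where your one-parameter-subgroup observation suffices.
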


In the previous work \cite{meng2020jordan}, Perroni, Zhang and the second author showed the Jordan property for $\Aut(X)$ when $X$ is a compact complex space in Fujiki's class $\mathcal{C}$.
The proof there developed a trick by finding some invariant K\"ahler submanifold $Z$ (with $\Aut(X)$ shrunk a bit)
and transferring the attention to a new compact K\"ahler manifold $X'$:
the compactified normal bundle $\mathbb{P}_Z(\mathcal{N}_{Z/X}\oplus \mathcal{O}_Z)$.
Note however $\Aut(X')$ only keeps tracking of finite subgroups (more generally reductive subgroups or torsion subgroups) of $\Aut(X)$.

Now with the help of \cref{main-cor-components}, we can provide an alternative proof.

\begin{corollary}\label{main-cor-abelian}
	Let $X$ be a compact complex space in Fujiki's class $\mathcal{C}$.
	Then there exists a constant $J$ such that any torsion subgroup $G$ of $\Aut(X)$ has an abelian subgroup $H\leq G$ with $[G:H]\leq J$.
	In particular, $\Aut(X)$ has Jordan property.
\end{corollary}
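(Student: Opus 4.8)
The plan is to reduce the statement, in three stages, to the Jordan--Schur theorem for torsion subgroups of linear groups, using \cref{main-cor-components} to descend into the identity component $\Aut_0(X)$. First I would reduce to the case where $X$ is a compact complex \emph{manifold}. After replacing $X$ by its reduction and normalization (both equivariant operations), a functorial resolution of singularities $\pi\colon X'\to X$ is automatically equivariant, so it yields an injective homomorphism $\Aut(X)\hookrightarrow\Aut(X')$; moreover $X'$ is bimeromorphic to $X$ and hence again lies in Fujiki's class $\mathcal{C}$. A torsion subgroup of $\Aut(X)$ thus becomes a torsion subgroup of $\Aut(X')$, and since the constant $J(X')$ produced below depends only on $X'$, and hence only on $X$, it suffices to treat smooth $X$.

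So assume $X$ smooth and let $G\le\Aut(X)$ be torsion. Consider the pullback action $\rho\colon G\to\GL(H^2(X,\Z))\cong\GL_N(\Z)$, where $N=\dim_{\Q}H^2(X,\Q)$. Its image is a torsion subgroup of $\GL_N(\Z)$; by Minkowski's theorem finite subgroups of $\GL_N(\Z)$ have order at most a bound $m(N)$, and since torsion linear groups are locally finite (Schur), a torsion subgroup all of whose finite subgroups have order $\le m(N)$ is itself finite of order $\le m(N)$. Setting $G_1\coloneqq G\cap\Ker\rho$ we obtain $[G:G_1]\le m(N)$. As the $\partial\overline{\partial}$-lemma holds on $X$, we have $[\alpha]\in H^{1,1}(X,\R)\subseteq H^2(X,\R)$, so every element of $G_1$, fixing $H^2(X,\Z)$ pointwise, fixes $[\alpha]$; hence $G_1\le\Aut_{[\alpha]}(X)$. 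By \cref{main-cor-components} the index $C\coloneqq[\Aut_{[\alpha]}(X):\Aut_0(X)]$ is finite, and therefore $G_2\coloneqq G_1\cap\Aut_0(X)$ satisfies $[G:G_2]\le m(N)\cdot C$.

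It remains to show that the torsion subgroup $G_2$ of the connected complex Lie group $\Aut_0(X)$ has an abelian subgroup of index bounded in terms of $X$ alone; this is the main obstacle, and it is purely Lie-theoretic. Here I would invoke the structure theorem of Lieberman and Fujiki (cf.~\cite{lieberman1978compactness}, \cite{fujiki1978automorphism}): via the action on the Albanese there is an exact sequence $1\to L\to\Aut_0(X)\to T\to 1$ with $T$ a compact complex torus and $L$ a linear algebraic group. Since $T$ is compact, the image of $G_2$ in $T$ has compact closure; and $G_2\cap L$ is a torsion subgroup of the linear group $L$, its elements being semisimple with root-of-unity eigenvalues, so it too has compact closure. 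An extension of compact by compact being compact, the closure $\overline{G_2}\le\Aut_0(X)$ is compact and therefore lies, up to conjugacy, in a maximal compact subgroup $K$ (by the Cartan--Iwasawa--Malcev theorem).

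Finally, $K$ is a compact Lie group, hence linear, so the Jordan--Schur theorem applied to a faithful representation of $K$ furnishes a constant $J_0=J_0(K)$ such that every torsion subgroup of $K$ contains an abelian subgroup of index at most $J_0$. In particular there is an abelian $H\le G_2$ with $[G_2:H]\le J_0$. Combining the three bounds, $H$ is abelian with $[G:H]\le m(N)\cdot C\cdot J_0\eqqcolon J$, a constant depending only on $X$. The Jordan property for $\Aut(X)$ is then the special case in which $G$ is finite. I expect the routine parts (the resolution reduction, the cohomology action, and the invocation of \cref{main-cor-components}) to be straightforward, with the only delicate point being the verification that a torsion subgroup of $\Aut_0(X)$ has compact closure, for which the Lieberman--Fujiki linear-by-torus description is essential.
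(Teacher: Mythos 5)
Your reduction to smooth $X$, the splitting of $G$ along its pullback action on $H^2(X,\Q)$ via Minkowski--Schur, and the descent into $\Aut_0(X)$ via \cref{main-cor-components} coincide with the paper's proof step for step: the paper packages the first bound as \cref{lem-torsion-finite} and works with the subgroup $\Aut_{\tau}(X)$ of cohomologically trivial automorphisms, which sits inside $\Aut_{[\alpha]}(X)$ exactly as you argue. The only divergence is the final, purely Lie-theoretic step. There the paper simply invokes Lee's theorem (\cref{thm-lee}, \cite{lee1976torsion}), which asserts that torsion subgroups of an arbitrary connected real Lie group are almost abelian with index bounded in terms of the group alone, whereas you attempt to prove this for $\Aut_0(X)$ by hand through the Lieberman--Fujiki linear-by-torus structure.

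That hand-made argument has a genuine gap at the step ``an extension of compact by compact being compact, the closure $\overline{G_2}$ is compact.'' What you establish is that $\overline{G_2\cap L}$ is compact and that $T$ is compact. To conclude compactness of $\overline{G_2}$ you would need (i) $\overline{G_2}\cap L$ --- not merely $\overline{G_2\cap L}$, which may be a proper subgroup of it --- to be compact, and (ii) the image of $\overline{G_2}$ in $T$ to be \emph{closed}. Neither is automatic: a closed subgroup of a Lie group meeting $L$ in a compact set and mapping injectively to a compact torus need not be compact. For instance $\langle(1,e^{i\alpha})\rangle\le\R\times S^1$ with $\alpha/2\pi$ irrational is closed and discrete, meets the $\R$-factor trivially, and injects into $S^1$ with dense non-closed image, yet is isomorphic to $\Z$. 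That example is not torsion, but this is exactly the point: the torsion hypothesis must be used again at this stage, and doing so is the actual content of Lee's theorem. So either cite \cref{thm-lee} as the paper does (which also spares you the appeal to Lieberman--Fujiki, whose standard formulation is for compact K\"ahler manifolds and would here require passing through the K\"ahler model of \cref{thm-aut0}), or supply a real proof that $\overline{G_2}\cap L$ is compact and that the image in $T$ is closed. A smaller imprecision in the same step --- ``semisimple with root-of-unity eigenvalues'' does not by itself give relative compactness of $G_2\cap L$; one needs local finiteness plus Jordan--Schur to produce a finite-index abelian subgroup that is simultaneously diagonalizable and hence conjugate into the unitary diagonal --- is repairable, but the compactness of $\overline{G_2}$ is not established as written.
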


\par \vskip 1pc \noindent
\textbf{Acknowledgement.}

The authors would like to thank Professor De-Qi Zhang for many inspiring discussions and valuable suggestions to improve the paper.
The second author would like to thank Professor Fabio Perroni for the invitation of the talk in Universit\`a degli studi di Trieste on December 2021,
whence a rough idea of the main theorem is formulated.
The first author is supported by a President's Scholarship of NUS.
The second author is supported by a Research Fellowship of KIAS (MG075501).

\section{Preliminaries}

Let $X$ be a compact complex manifold with a fixed positive definite Hermitian form $\omega$.
Let $\alpha$ be a closed $(1,1)$-form.
We use $[\alpha]$ to represent its class in $H^{1,1}(X,\mathbb{R})$.
We define the following positivity notions (independent of the choice of $\omega$):
\begin{itemize}
	\item $[\alpha]$ is \emph{K\"ahler} if it contains a K\"ahler form,
	      i.e., if there is a smooth function $\varphi$ such that $\alpha+\frac{\sqrt{-1}}{2\pi}\partial\overline{\partial}\varphi\geq \epsilon\omega$ on $X$ for some $\epsilon>0$.
	\item $[\alpha]$ is \emph{big} if it contains a K\"ahler current $T$, i.e., if there is a quasi-plurisubharmonic function $\varphi\colon X\to \mathbb{R}\cup\{-\infty\}$ such that
	      $T\coloneqq \alpha+\frac{\sqrt{-1}}{2\pi}\partial\overline{\partial}\varphi\geq \epsilon\omega$ holds weakly as currents on $X$ for some $\epsilon>0$.
\end{itemize}
Recall that a \emph{quasi-plurisubharmonic} function means that locally it is given by the sum of a plurisubharmonic function plus a smooth function.

We recall the definition of the non-K\"ahler locus of a big class $[\alpha]$.

\begin{definition}
	Let $X$ be a compact complex manifold and $[\alpha]$ a big $(1,1)$-class.
	Then the \emph{non-K\"ahler locus} of $[\alpha]$ (or $\alpha$) is defined and denoted by
	\[
		E_{nK}(\alpha)\coloneqq E_{nK}([\alpha])\coloneqq \bigcap\limits_{T\in[\alpha]} \Sing(T),
	\]
	where the intersection ranges over all K\"ahler currents $T =\alpha+\sqrt{-1}\partial\bar{\partial}\varphi$ in the class $[\alpha]$,
	and $\Sing(T)$ is the complement of the set of points $x\in X$ such that $\varphi$ is smooth near $x$.
\end{definition}

We recall the basic definition of analytic singularities (cf.~\cite{boucksom2002volume}*{Section~2.1}).
Note that the data $(\mathcal{J},c)$ below is not uniquely determined by the function $\varphi$ with analytic singularities.

\begin{definition}\label{def-sing}
	Let $X$ be a compact complex manifold and $[\alpha]$ a closed $(1,1)$-class.
	\begin{enumerate}
		\item Given a coherent ideal sheaf $\mathcal{J}$ and a constant $c>0$,
		      we say that a function $\varphi$ has \emph{singularities of type $(\mathcal{J},c)$} if locally it can be written as
		      \[
			      \varphi=c\log\Big(\sum_{j=1}^n \abs{f_j}^2\Big)+h
		      \]
		      for some local generators $(f_j)$ of $\mathcal{J}$ and some smooth function $h$.
		\item We say that $\varphi$ has \emph{analytic singularities} if it has singularities of type $(\mathcal{J},c)$
		      for some coherent ideal sheaf $\mathcal{J}$ and some constant $c>0$.
		\item We also say that a closed current $T\in [\alpha]$ has \emph{analytic singularities}
		      if it can be written as $T=\alpha+\frac{\sqrt{-1}}{2\pi}\partial\overline{\partial}\varphi$ such that the potential function $\varphi$ has analytic singularities.
	\end{enumerate}
\end{definition}

The following is a direct application of the regularization theorem by Demailly~\cite{demailly1992regularization};
see also \cite{demailly2004numerical}*{Theorem~3.2}.

\begin{theorem}\label{thm-analytic}
	Let $X$ be a compact complex manifold with a big $(1,1)$-class $[\alpha]$.
	Then there exists a K\"ahler current $T\in [\alpha]$ with analytic singularities.
\end{theorem}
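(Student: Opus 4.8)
The plan is to start from the very definition of bigness and then regularize. Since $[\alpha]$ is big, it contains by definition a K\"ahler current
\[
	T_0 = \alpha + \frac{\sqrt{-1}}{2\pi}\partial\overline{\partial}\varphi_0 \geq \epsilon\omega
\]
for some quasi-plurisubharmonic function $\varphi_0$ and some $\epsilon>0$, where $\omega$ is the fixed positive definite Hermitian form. This current is already strictly positive, but its potential $\varphi_0$ need not have analytic singularities. The goal is therefore to replace $\varphi_0$ by a function with analytic singularities while giving up only an arbitrarily small amount of positivity, which is possible precisely because the bound $T_0\geq\epsilon\omega$ leaves a definite margin to spend.

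The main tool is Demailly's regularization theorem \cite{demailly1992regularization}. Applied to the almost positive closed $(1,1)$-current $T_0$ with the lower bound $T_0\geq\epsilon\omega$, it produces a sequence of quasi-plurisubharmonic functions $\varphi_m$ with analytic singularities---locally of the form $\frac{1}{2m}\log\big(\sum_j \abs{f_{m,j}}^2\big)+(\text{smooth})$, where the $f_{m,j}$ are local generators of the multiplier ideal sheaves $\mathcal{J}_m=\mathcal{I}(m\varphi_0)$---such that $\varphi_m$ decreases to $\varphi_0$ up to an $O(1/m)$ error and such that the associated currents $T_m=\alpha+\frac{\sqrt{-1}}{2\pi}\partial\overline{\partial}\varphi_m$ satisfy a uniform estimate
\[
	T_m \geq (\epsilon-\epsilon_m)\,\omega, \qquad \epsilon_m\searrow 0 .
\]
By construction each $\varphi_m$ has singularities of type $(\mathcal{J}_m,1/m)$, so by \cref{def-sing} each $T_m$ has analytic singularities and lies in the class $[\alpha]$.

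To finish I would simply fix an index $m$ large enough that $\epsilon_m\leq \epsilon/2$. Then
\[
	T_m \geq (\epsilon-\epsilon_m)\,\omega \geq \tfrac{\epsilon}{2}\,\omega > 0,
\]
so $T_m$ is a K\"ahler current with analytic singularities in $[\alpha]$, as required.

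The step that carries all the weight is the invocation of the regularization theorem, and the delicate point there is the quantitative control $\epsilon_m\to 0$ of the loss of positivity: a priori one only expects a bound $T_m\geq \epsilon\omega-\lambda_m\omega$ with $\lambda_m$ governed by the Lelong numbers of $T_0$, and it is the compactness of $X$ together with the Ohsawa--Takegoshi-type $L^2$ estimates underlying the construction that force these error terms to tend to $0$ uniformly. Note that one needs no fine information about the Lelong numbers of $T_0$ or about $E_{nK}([\alpha])$: since we begin with the strict bound $T_0\geq\epsilon\omega$, even after absorbing the small loss $\epsilon_m$ the regularized current stays strictly positive, which is exactly what makes the reduction to Demailly's theorem immediate.
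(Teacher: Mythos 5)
Your proposal is correct and is exactly the argument the paper has in mind: the paper offers no written proof, simply declaring the statement a direct application of Demailly's regularization theorem, and your write-up is the standard way of spelling that application out (regularize the Kähler current $T_0\geq\epsilon\omega$ into currents $T_m$ with analytic singularities losing only $\epsilon_m\omega$ of positivity, then take $m$ large). You have also correctly isolated the quantitative control $\epsilon_m\to 0$ as the point where all the analytic content lives.
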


Boucksom~\cite{boucksom2004divisorial}*{Theorem~3.17} observed further that indeed one can find a K\"ahler current with analytic singularities and also with minimal singular locus.

\begin{theorem}\label{thm-boucksom}
	Let $X$ be a compact complex manifold with a big $(1,1)$-class $[\alpha]$.
	Then there exists a K\"ahler current $T\in [\alpha]$ with analytic singularities such that $\Sing(T)=E_{nK}(\alpha)$.
	In particular, $E_{nK}(\alpha)$ is a closed analytic subspace of $X$ and $E_{nK}(\alpha)=\emptyset$ if and only if $[\alpha]$ is K\"ahler.
\end{theorem}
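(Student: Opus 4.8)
The plan is to realize the desired current as a minimal element, measured by its singular locus, of the family of Kähler currents with analytic singularities in $[\alpha]$, using a maximum construction to make this family downward directed and the Noetherian property of closed analytic subsets to extract the minimum. First I would record the building block: by \cref{thm-analytic} the class $[\alpha]$ carries at least one Kähler current with analytic singularities, so the family
\[
	\mathcal{S}\coloneqq\bigl\{\Sing(T)\mid T\in[\alpha]\text{ a K\"ahler current with analytic singularities}\bigr\}
\]
is nonempty. If $T=\alpha+\frac{\sqrt{-1}}{2\pi}\partial\overline{\partial}\varphi\in\mathcal{S}$ with $\varphi$ of type $(\mathcal{J},c)$, then $\Sing(T)$ is the zero locus of the coherent ideal sheaf $\mathcal{J}$, hence a closed analytic subset of $X$; so $\mathcal{S}$ is a nonempty family of closed analytic subsets.

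Next I would show that $\mathcal{S}$ is downward directed. Given $T_i=\alpha+\frac{\sqrt{-1}}{2\pi}\partial\overline{\partial}\varphi_i\geq\epsilon_i\omega$ ($i=1,2$) in $\mathcal{S}$, I set $\varphi\coloneqq\max(\varphi_1,\varphi_2)$, taken in the sense of Demailly's regularized maximum, and $T\coloneqq\alpha+\frac{\sqrt{-1}}{2\pi}\partial\overline{\partial}\varphi$. Then $T\in[\alpha]$, and $T\geq\min(\epsilon_1,\epsilon_2)\,\omega$ is again a Kähler current; after rescaling the local generators it has analytic singularities. The key point is the inclusion $\Sing(T)\subseteq\Sing(T_1)\cap\Sing(T_2)$: near a point $x$ where, say, $\varphi_1$ is smooth and finite, the potential $\varphi_2$ is dominated by $\varphi_1$ wherever it tends to $-\infty$, so the regularized maximum coincides with $\varphi_1$ there and is a smooth function of two smooth functions elsewhere; in both cases $\varphi$ is smooth near $x$. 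This directedness step is the one I expect to be the main obstacle, since one must verify simultaneously that positivity is preserved, that the analytic-singularities structure survives the maximum, and that the local smoothness analysis is valid across the $-\infty$ poles of the dominated potential.

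I would then run a Noetherian descent. Closed analytic subsets satisfy the descending chain condition, so the nonempty family $\mathcal{S}$ has a minimal element $\Sing(T_{*})$. By directedness, for every $T\in\mathcal{S}$ there is $T'\in\mathcal{S}$ with $\Sing(T')\subseteq\Sing(T_{*})\cap\Sing(T)$; minimality forces $\Sing(T')=\Sing(T_{*})$, whence $\Sing(T_{*})\subseteq\Sing(T)$. Therefore $\Sing(T_{*})=\bigcap_{T\in\mathcal{S}}\Sing(T)$.

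It remains to identify this intersection with $E_{nK}(\alpha)$. Since $\mathcal{S}$ ranges over a subfamily of all Kähler currents, intersecting over fewer currents gives $\bigcap_{T\in\mathcal{S}}\Sing(T)\supseteq E_{nK}(\alpha)$. For the reverse inclusion, let $T_0\in[\alpha]$ be any Kähler current and $x\notin\Sing(T_0)$; applying Demailly's regularization theorem (the input to \cref{thm-analytic}) to $T_0$ produces Kähler currents with analytic singularities in $[\alpha]$ whose singular loci lie in the Lelong sublevel sets of $T_0$, and since $T_0$ is smooth near $x$ these loci avoid $x$ for the relevant parameter, so $x\notin\bigcap_{T\in\mathcal{S}}\Sing(T)$. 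Hence $\bigcap_{T\in\mathcal{S}}\Sing(T)\subseteq\Sing(T_0)$ for every such $T_0$, giving $\bigcap_{T\in\mathcal{S}}\Sing(T)\subseteq E_{nK}(\alpha)$. Thus $\Sing(T_{*})=E_{nK}(\alpha)$, which is therefore a closed analytic subspace of $X$. Finally, $E_{nK}(\alpha)=\emptyset$ means $T_{*}$ is smooth everywhere, i.e.\ a Kähler form representing $[\alpha]$, so $[\alpha]$ is Kähler; conversely any Kähler form is a Kähler current with empty singular locus, forcing $E_{nK}(\alpha)=\emptyset$.
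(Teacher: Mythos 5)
The paper does not prove this statement at all: it is quoted verbatim from Boucksom \cite{boucksom2004divisorial}*{Theorem~3.17}, so the only comparison available is with Boucksom's own argument --- and your proposal is essentially that argument (regularization to reduce to currents with analytic singularities, stability of the family of singular loci under the regularized maximum of potentials, and the strong Noetherian property of analytic subsets to extract a minimal, hence smallest, element). The Noetherian descent, the identification of the minimal singular locus with $E_{nK}(\alpha)$ via Demailly regularization of an arbitrary K\"ahler current, and the final equivalence with K\"ahlerness are all correct as written.

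The one step you have not actually justified is the assertion that the regularized maximum ``after rescaling the local generators'' again has analytic singularities in the sense of \cref{def-sing}. Your smoothness analysis of $M_\eta(\varphi_1,\varphi_2)$ off $\Sing(T_1)\cap\Sing(T_2)$ is fine (it is the two-open-set covering argument: $M_\eta=\varphi_1$ on the neighbourhood $\{\varphi_2<\varphi_1-\eta\}$ of the poles of $\varphi_2$, and a smooth function of two smooth functions elsewhere), and so is the positivity $\alpha+\frac{\sqrt{-1}}{2\pi}\partial\overline{\partial}M_\eta(\varphi_1,\varphi_2)\geq\min(\epsilon_1,\epsilon_2)\,\omega$. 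But near a point of $\Sing(T_1)\cap\Sing(T_2)$, even after arranging $c_1=c_2=c$, the function $M_\eta(\varphi_1,\varphi_2)$ differs from $c\log\bigl(\sum_k\abs{g_k}^2\bigr)$ (for generators $g_k$ of $\mathcal{J}_1+\mathcal{J}_2$) only by a \emph{bounded}, not smooth, term, so it need not be of type $(\mathcal{J},c)$ with smooth remainder $h$ as \cref{def-sing} requires; rescaling generators does not repair this, and your directedness step needs the new current to lie in $\mathcal{S}$ (this also matters at the end, since the theorem asserts $T_*$ itself has analytic singularities). The standard fix is cheap: the regularized-max current is a K\"ahler current whose potential is smooth outside $\Sing(T_1)\cap\Sing(T_2)$, so one further application of Demailly's regularization theorem produces a K\"ahler current with genuine analytic singularities and singular locus contained in $\Sing(T_1)\cap\Sing(T_2)$, which is all the directedness argument needs. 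With that patch inserted, your proof is complete and coincides with Boucksom's.
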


\section{Equivariant K\"ahler model}

In this section, we prove \cref{main-thm}.

First, we recall the following theorem on the equivariant log resolution of an ideal sheaf by Bierstone and Milman~\cite{bierstone1997canonical}*{Theorem~1.10}.
\begin{theorem}\label{thm-BM}
	Let $X$ be a compact complex manifold with a coherent ideal sheaf $\mathcal{J}$.
	Let $G\leq \Aut(X)$ such that $g^*\mathcal{J}=\mathcal{J}$ for any $g\in G$.
	Then there is a finite sequence
	\[
		\xymatrix{
		X_k \ar[r]^{\sigma_k} & \cdots \ar[r]^{\sigma_2} & X_1 \ar[r]^-{\sigma_1} & X_0 = X \\
		}
	\]
	of $G$-equivariant blowups $\sigma_j$, $j = 1,\dots,k$,
	along smooth centres,
	such that $\sigma^{-1}\mathcal{J}\cdot \mathcal{O}_{X_k}$ is a normal-crossings divisor.
\end{theorem}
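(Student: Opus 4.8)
The plan is to deduce the statement from the \emph{canonical} (functorial) principalization of a coherent ideal sheaf in characteristic zero, whose defining feature is that the entire sequence of blowups is determined intrinsically by $\mathcal{J}$ and the smooth ambient manifold, with no auxiliary choices. Once this canonicity is in hand, equivariance under any automorphism preserving $\mathcal{J}$ becomes automatic rather than something to be arranged by hand. Concretely, I would invoke the Bierstone--Milman desingularization algorithm, which at each stage selects its centre as the maximal stratum of a local invariant $\inv$ built from the orders of the (controlled) transforms of $\mathcal{J}$ together with the accumulated exceptional-divisor data; these strata are automatically smooth.

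First I would fix the composite output of the algorithm applied to $(X,\mathcal{J})$, a finite tower $X_k \xrightarrow{\sigma_k} \cdots \xrightarrow{\sigma_1} X_0 = X$ with smooth centres $C_j \subset X_j$ and total transform $\sigma^{-1}\mathcal{J}\cdot\mathcal{O}_{X_k}$ a normal-crossings divisor, where $\sigma = \sigma_1 \circ \cdots \circ \sigma_k$; finiteness of $k$ uses compactness of $X$. Next I would run the equivariance induction. The invariant $\inv$ is preserved by every local analytic isomorphism, so any $g \in G$ with $g^*\mathcal{J} = \mathcal{J}$ carries the maximal $\inv$-stratum $C_0$ to itself; hence $C_0$ is $G$-invariant, and $g$ lifts to an automorphism $\tilde g_1$ of $X_1 = \mathrm{Bl}_{C_0} X$ with $\sigma_1 \circ \tilde g_1 = g \circ \sigma_1$. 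Because $\tilde g_1$ preserves both the transform of $\mathcal{J}$ and the exceptional divisor $\sigma_1^{-1}(C_0)$, it preserves the invariant at the next stage and therefore the centre $C_1$; iterating produces a lift $\tilde g_j$ on each $X_j$ and shows every $\sigma_j$ to be $G$-equivariant. Functoriality of the construction guarantees that the lifts respect composition, so they assemble into an action of the whole group $G$ rather than of individual elements.

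The hard part will be justifying, in the compact complex-analytic setting rather than the algebraic one, that the desingularization is genuinely functorial for local isomorphisms and that the driving invariant is intrinsic; this is exactly what Bierstone--Milman establish in the analytic category, so in practice it is invoked rather than reproved. A secondary point worth stressing is that $G$ may be infinite, even positive-dimensional, so no finiteness of $G$ is available: it is precisely the choice-free, canonical nature of the algorithm that lets a single tower serve all of $G$ simultaneously, which a non-canonical resolution such as the Demailly--Paun blowups attached to $[\alpha]$ cannot achieve.
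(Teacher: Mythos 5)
Your proposal is correct and is essentially the paper's own argument: both rest on the canonicity of the Bierstone--Milman principalization, whose $\inv$-admissible centres are determined intrinsically by $\mathcal{J}$ (and the accumulated exceptional data) and are therefore preserved by every $g\in G$ with $g^*\mathcal{J}=\mathcal{J}$. You simply spell out the stage-by-stage lifting induction that the paper leaves implicit in its citation of \cite{bierstone1997canonical} and \cite{wlodarczyk2009resolution}.
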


\begin{proof}
	In \cite{bierstone1997canonical}*{Theorem~1.10}, the sequence is taken as blowups along $\inv_{\mathcal{J}}$-admissible centres.
	Note that an $\inv_{\mathcal{J}}$-admissible centre is determined by $\mathcal{J}$ itself and hence $G$-invariant.
	So the blowups are $G$-equivariant.
	We refer to \cite{wlodarczyk2009resolution} for the details.
\end{proof}

The following lemma is well-known, and we give a proof for the convenience of the readers.
The result holds true in the algebraic setting with the same proof.

\begin{lemma}\label{lem-hol-lift}
	Let $\sigma\colon X'\to X$ be a bimeromorphic holomorphic map of compact complex spaces.
	Let $G$ be a complex Lie group acting holomorphically on $X$ such that $G$ lifts via $\sigma$.
	Then the induced action of $G$ on $X'$ is also holomorphic.
\end{lemma}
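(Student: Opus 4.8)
The plan is to show that the lifted action, viewed as a meromorphic map, is in fact holomorphic, and the decisive point will be continuity. Write $a\colon G\times X\to X$ for the given holomorphic action and set $\beta\coloneqq a\circ(\id_G\times\sigma)\colon G\times X'\to X$, which is holomorphic. The lifted action is the meromorphic map $\tilde a\colon G\times X'\ratmap X'$ characterised by $\sigma\circ\tilde a=\beta$; set-theoretically $\tilde a(g,x')=\tilde g(x')$, where $\tilde g\coloneqq\sigma^{-1}\circ g\circ\sigma$ is the biholomorphic lift supplied by the hypothesis. Let $U\subseteq X$ be the largest open set over which $\sigma$ is an isomorphism, $U'\coloneqq\sigma^{-1}(U)$, and $E\coloneqq X'\setminus U'$ the (analytic) exceptional locus. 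On the open set $W\coloneqq\{(g,x')\in G\times U'\mid g\cdot\sigma(x')\in U\}$ the map $\tilde a$ agrees with the holomorphic $(\sigma|_{U'})^{-1}\circ\beta$, so it is holomorphic there. Since $G\times X'$ is normal and a continuous map that is holomorphic off a proper analytic subset is holomorphic (Riemann extension), it suffices to prove that $\tilde a$ is continuous.

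A left-translation reduction pins down where continuity must be checked. The cocycle identity $\widetilde{g_0g}=\tilde g_0\circ\tilde g$ gives $\tilde a\circ(L_{g_0}\times\id_{X'})=\tilde g_0\circ\tilde a$, where $L_{g_0}$ is left translation by $g_0$; as $L_{g_0}\times\id_{X'}$ and $\tilde g_0$ are biholomorphisms, $\tilde a$ is continuous at $(g_0,x')$ if and only if it is continuous at $(e,x')$. Because $W$ is an open neighbourhood of $\{e\}\times U'$, continuity at $(e,x')$ is automatic for $x'\in U'$; thus $\tilde a$ is continuous, hence holomorphic, on $G\times U'$, and the whole problem collapses to \emph{continuity of $\tilde a$ along $\{e\}\times E$}.

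To organise the remaining analysis I would pass to the graph. Let $\overline\Gamma\subseteq(G\times X')\times X'$ be the closure of the graph of $\tilde a$, with projections $p\colon\overline\Gamma\to G\times X'$ (proper and bimeromorphic) and $\pi\colon\overline\Gamma\to X'$; then $\sigma\circ\pi=\beta\circ p$. Since $G\times X'$ is normal, Zariski's Main Theorem says $p$ is an isomorphism as soon as it has finite fibres, and then $\tilde a=\pi\circ p^{-1}$ is holomorphic. From $\sigma\circ\pi=\beta\circ p$ each fibre of $p$ is carried by $\pi$ into a single fibre of $\sigma$, so over $U$ the fibres of $p$ are points and the indeterminacy locus $I\coloneqq p(\Exc p)$ satisfies $I\subseteq G\times E$. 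The graph also inherits a $G$-action $((g,x'),y')\mapsto((g_0g,x'),\tilde g_0(y'))$, under which $p$ is equivariant for left translation on the $G$-factor; hence $I$ is invariant under left translation and therefore has the form $I=G\times E'$ for some analytic $E'\subseteq E$. As the indeterminacy locus of a meromorphic map from a normal space, $I$ has codimension $\ge2$, so $\Codim_{X'}E'\ge2$. Finally, the holomorphy of each slice $\tilde g$ shows that its (closed) graph $\Delta_g\cong X'$ lies in $\overline\Gamma$ and maps isomorphically onto $\{g\}\times X'$.

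The main obstacle is to upgrade this to $E'=\varnothing$, equivalently to rule out positive-dimensional fibres of $p$ over $G\times E$, equivalently (by the reduction above) to prove continuity of $\tilde a$ at the points $(e,x')$ with $x'\in E$. This is genuinely where the work lies, because along $E$ the map $\sigma^{-1}$ is ill-behaved and the constraint $\sigma\circ\tilde a=\beta$ only confines $\tilde a(g,x')$ to the \emph{whole} positive-dimensional fibre $\sigma^{-1}(\beta(g,x'))$, not to a neighbourhood of the expected value $\tilde g(x')$. I expect to close the gap by establishing finiteness of the fibres of $p$ and invoking Zariski's Main Theorem: concretely, one shows that the lifts $\{\tilde g\}$ of a compact set of automorphisms of $X$ form a relatively compact (equivalently equicontinuous) family in $\Aut(X')$, so that $\tilde g_n(x'_n)\to\tilde g_0(x'_0)$ whenever $(g_n,x'_n)\to(g_0,x'_0)$. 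This equicontinuity, which amounts to the properness of conjugation by $\sigma$ from $\Aut(X)$ to $\Aut(X')$, is the one step that uses the compactness of $X'$ in an essential way; the same argument then works verbatim in the algebraic category with Zariski's Main Theorem over a normal base, which is the source of the remark that the proof transfers to the algebraic setting.
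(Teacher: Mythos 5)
Your reduction to continuity of $\widetilde a$ along $\{e\}\times E$ is correctly organised, but the proposal stops exactly where the lemma has content. The final step --- that the lifts $\widetilde g$ of a compact family of automorphisms of $X$ form an equicontinuous (relatively compact) family on $X'$, so that $\widetilde g_n(x'_n)\to\widetilde g_0(x'_0)$ --- is introduced with ``one shows that'' and never proved. This is not an auxiliary fact: it is equivalent to the continuity of $\widetilde a$ at the points of $\{e\}\times E$, i.e.\ to the statement you set out to prove, so the argument is circular at its crux. Nothing in the preceding graph analysis (the inclusion $I\subseteq G\times E$, the translation invariance $I=G\times E'$, the codimension bound) rules out $E'\neq\varnothing$; moreover the appeals to normality of $G\times X'$ (for the Riemann extension step and for the codimension-$\geq 2$ bound on the indeterminacy locus) are unjustified, since the lemma only assumes $X'$ is a compact complex space. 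So there is a genuine gap.

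For comparison, the paper also passes to the graph, but works with the proper transform $\Gamma'$ of the graph $\Gamma\subseteq G\times X\times X$ of the action under $(g,x',y')\mapsto(g,\sigma(x'),\sigma(y'))$, and closes the argument by identifying the fibre of $\Gamma'\to G$ over each $g$ with the proper transform of the graph of $g|_X$, which is the graph of the biholomorphic lift $\widetilde g$ and hence a copy of $X'$; the projection $\Gamma'\to G\times X'$ is then a proper bijection restricting to an isomorphism over each $g\in G$, hence biholomorphic, and $\Gamma'$ is the graph of the lifted action. That fibrewise identification is precisely the substitute for the equicontinuity you would need. To complete your version, you should prove directly that the fibre of $\overline\Gamma$ over $g\in G$ is no larger than the graph of $\widetilde g$ (equivalently, that $p$ has finite fibres over $G\times E$), rather than asserting relative compactness of $\{\widetilde g\}$ in $\Aut(X')$.
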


\begin{proof}

	Consider the holomorphic map
	\[
		\phi \colon G\times X'\times X'\to G\times X\times X
	\]
	via $(g,x',y')\mapsto (g,\sigma(x'),\sigma(y'))$.

	Consider the graph of $G$ on $X$:
	\[
		\Gamma\coloneqq \{(g,x,y)\in G\times X\times X\mid y=g(x)\}
	\]
	which is a closed analytic subspace of $G\times X\times X$ since the $G$-action on $X$ is holomorphic.

	For each $g\in G$,
	let $\Gamma_{g|_{X}}$ be the graph of $g|_X$ viewed as a fibre of $\Gamma\to G$ over $g$.
	Define $\Gamma_{g|_{X'}}$ similarly.
	Then $\Gamma_{g|_{X'}}$ is the proper transform of $\Gamma_{g|_X}$ in $G\times X'\times X'$ via $\phi$.
	Let $\Gamma'$ be the proper transform of $\Gamma$ in $G\times X'\times X'$.
	Then $\Gamma'$ is a closed analytic subspace of $G\times X'\times X'$.
	By taking the first projection, the fibre of $\Gamma'\to G$ over $g$ is just $\Gamma_{g|_{X'}}\cong X'$ since $g \colon X'\to X'$ is biholomorphic.
	Therefore, the projection to the first two factors $\Gamma'\to G\times X'$ is biholomorphic.
	Note that $\Gamma'$ is the graph of $G$ on $X'$.
	So the $G$-action on $X'$ is holomorphic.
\end{proof}

\begin{proof}[Proof of ~\cref{main-thm}]
	By \cref{thm-analytic},
	there exists a K\"ahler current
	\[
		T=\alpha+\frac{\sqrt{-1}}{2\pi}\partial\overline{\partial}\varphi \in [\alpha]
	\]
	with analytic singularities of type $(\mathcal{J}_{\varphi},c)$
	for some coherent ideal sheaf $\mathcal{J}_{\varphi}$ and $c>0$.
	This means that we may write (locally) that
	\[
		\varphi=c\log \Big(\sum_{j=1}^n \abs{f_j}^2\Big)+h
	\]
	where $(f_j)$ are local generators of $\mathcal{J}_{\varphi}$ and $h$ is smooth.
	Consider the $\Aut_{[\alpha]}(X)$-invariant ideal sheaf
	\[
		\mathcal{J}\coloneqq \sum_{g\in \Aut_{[\alpha]}(X)}g^*\mathcal{J}_{\varphi}.
	\]
	Since $X$ is compact, $\mathcal{J}$ is also coherent (cf.~\cite{demailly1997complex}*{Chapter~\Rmnum{2}, Property~(3.22)}).
	Then we may write
	\[
		\mathcal{J}=\sum_{i=1}^m \mathcal{J}_i
	\]
	where $\mathcal{J}_i\coloneqq g_i^*\mathcal{J}_{\varphi}$ for some $g_i\in \Aut_{[\alpha]}(X)$.

	By \cref{thm-BM}, there is an $\Aut_{[\alpha]}(X)$-equivariant biholomorphic holomorphic map
	\[
		\sigma \colon \widetilde{X}\to X
	\]
	which is obtained by a sequence of blowups
	along smooth centres such that $\sigma^{-1}\mathcal{J}\cdot \mathcal{O}_{\widetilde{X}}$ is invertible.

	Let $\mathcal{J}'_i\coloneqq \sigma^{-1}\mathcal{J}_i\cdot \mathcal{O}_{\widetilde{X}}$.
	Then
	\[
		\sum_{i=1}^m \mathcal{J}'_i=\sigma^{-1}\mathcal{J}\cdot \mathcal{O}_{\widetilde{X}}
	\]
	is invertible.
	Denote by
	\[
		\widetilde{\mathcal{J}}_i\coloneqq \mathcal{J}'_i\cdot \big(\sigma^{-1}\mathcal{J}\cdot \mathcal{O}_{\widetilde{X}}\big)^{-1}
	\]
	which is still an ideal sheaf.
	Then
	\[
		\sum_{i=1}^m \widetilde{\mathcal{J}}_i =\mathcal{O}_{\widetilde{X}}.
	\]
	We give several more notations.
	\begin{itemize}[leftmargin=2em]
		\item Let $s$ be the local generator of invertible sheaf $\sigma^{-1}\mathcal{J}\cdot \mathcal{O}_{\widetilde{X}}$.
		\item We define a positive current $D$ which is the integration current along the divisor $\{s=0\}$.
		      By the Poincar\'e-Lelong formula,
		      $D$ can be locally written as $\frac{\sqrt{-1}}{2\pi}\partial\overline{\partial}\log \abs{s}^2$.
		\item Let $E$ be the reduced (full) exceptional divisor (locus) of $\sigma$.
		\item Fix a (positive definite) Hermitian form $\omega$ on $X$ such that $g_i^*T\geq \omega$ for each $i$.
		\item Let $u_E$ be some closed smooth $(1,1)$-form in the class $[E]$ such that $\sigma^*\omega-\epsilon u_E$ is a positive definite Hermitian form when $\epsilon>0$ is sufficiently small (cf.~\cite{demailly2004numerical}*{Proof of Lemma~3.5}).
	\end{itemize}

	We show that $\widetilde{X}$ is K\"ahler by the following claim.
	\begin{claim}\label{claim}
		The class $[\widetilde{\alpha}]\coloneqq [\sigma^*\alpha-c D-\epsilon u_E]$ contains a K\"ahler form for sufficiently small $\epsilon>0$.
	\end{claim}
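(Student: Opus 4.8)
The plan is to produce a genuine K\"ahler form inside $[\widetilde{\alpha}]$ by gluing the pullbacks of the currents $g_i^*T$ and then stripping off their (purely divisorial) singular part. First I would record the relevant pullbacks: since $g_i\in\Aut_{[\alpha]}(X)$, the current $g_i^*T$ lies in $[\alpha]$ and, by the choice of $\omega$, satisfies $g_i^*T\geq\omega$; moreover $g_i^*T$ has analytic singularities of type $(\mathcal{J}_i,c)$. Hence each $\sigma^*(g_i^*T)=\sigma^*\alpha+\frac{\sqrt{-1}}{2\pi}\partial\overline{\partial}\varphi_i$ is a K\"ahler current in $[\sigma^*\alpha]$ with $\sigma^*(g_i^*T)\geq\sigma^*\omega$, whose potential has analytic singularities of type $(\mathcal{J}'_i,c)$. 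Using the local generator $s$ of $\sigma^{-1}\mathcal{J}\cdot\mathcal{O}_{\widetilde{X}}$, I would factor locally $\varphi_i=c\log\abs{s}^2+\Psi_i+h_i$, where $h_i$ is smooth and $\Psi_i=c\log\sum_j\abs{\tau_{ij}}^2$ for local generators $\tau_{ij}$ of $\widetilde{\mathcal{J}}_i$.

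Next I would form Demailly's regularized maximum $\widetilde{T}_{\max}\coloneqq\max_\eta\big(\sigma^*(g_1^*T),\dots,\sigma^*(g_m^*T)\big)$, that is, $\sigma^*\alpha+\frac{\sqrt{-1}}{2\pi}\partial\overline{\partial}\max_\eta(\varphi_1,\dots,\varphi_m)$. Since on its smooth locus the regularized maximum is a convex combination of the $\varphi_i$, it preserves the common pointwise lower bound on the complex Hessian; thus $\widetilde{T}_{\max}$ is again a K\"ahler current in $[\sigma^*\alpha]$ with $\widetilde{T}_{\max}\geq\sigma^*\omega$. The crucial step is to show that $\widetilde{T}_{\max}-cD$ is a \emph{smooth} form. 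Pulling out the common factor $c\log\abs{s}^2$, one has locally $\widetilde{T}_{\max}-cD=\sigma^*\alpha+\frac{\sqrt{-1}}{2\pi}\partial\overline{\partial}\max_\eta(\Psi_i+h_i)$, so it suffices to check that $\max_\eta(\Psi_i+h_i)$ is smooth. Here the identity $\sum_i\widetilde{\mathcal{J}}_i=\mathcal{O}_{\widetilde{X}}$ is decisive: at any point $p$ some $\widetilde{\mathcal{J}}_{i_0}$ is the whole local ring, so $\Psi_{i_0}$ is smooth and finite near $p$, while every branch $\Psi_i$ equal to $-\infty$ at $p$ lies far below and is discarded by the regularized maximum. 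Consequently $\max_\eta(\Psi_i+h_i)$ is, near each point, the regularized maximum of finitely many smooth branches, hence smooth.

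Finally I would conclude by continuity. On the dense open set $\widetilde{X}\setminus\{s=0\}$ the current $cD$ vanishes, so $\widetilde{T}_{\max}-cD=\widetilde{T}_{\max}\geq\sigma^*\omega$ there pointwise; since $\widetilde{T}_{\max}-cD$ is smooth on all of $\widetilde{X}$, this inequality extends by continuity to the whole manifold. Therefore, for $\epsilon>0$ small enough that $\sigma^*\omega-\epsilon u_E$ is positive definite,
\[
	\widetilde{T}_{\max}-cD-\epsilon u_E\;\geq\;\sigma^*\omega-\epsilon u_E\;>\;0
\]
is a smooth, closed, positive definite $(1,1)$-form, i.e.\ a K\"ahler form, and it represents the class $[\sigma^*\alpha-cD-\epsilon u_E]=[\widetilde{\alpha}]$, which proves the claim.

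I expect the main obstacle to be the smoothness of $\widetilde{T}_{\max}-cD$ in the second step: one must verify that the only singularities of the regularized maximum are the divisorial ones along $\{s=0\}$, captured exactly by $cD$, with no residual singularities surviving. This is precisely where the no-common-zeros property $\sum_i\widetilde{\mathcal{J}}_i=\mathcal{O}_{\widetilde{X}}$ --- and hence the whole point of the equivariant construction $\mathcal{J}=\sum_g g^*\mathcal{J}_{\varphi}$ together with the Bierstone--Milman resolution --- enters in an essential way.
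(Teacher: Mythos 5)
Your argument is correct, but it reaches the conclusion by a genuinely different route from the paper. The paper keeps the $m$ pullback currents separate: it sets $\widetilde{T}_i\coloneqq \sigma^*g_i^*T-cD-\epsilon u_E\in[\widetilde{\alpha}]$, uses the Siu decomposition $\sigma^*g_i^*T=R_i+cD_i$ together with $D_i\geq D$ to see that each $\widetilde{T}_i$ is a K\"ahler current with analytic singularities exactly along $V(\widetilde{\mathcal{J}}_i)$, observes that $\bigcap_{i}V(\widetilde{\mathcal{J}}_i)=\emptyset$, and concludes $E_{nK}(\widetilde{\alpha})=\emptyset$, so that $[\widetilde{\alpha}]$ is K\"ahler by Boucksom's characterization (\cref{thm-boucksom}). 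You instead glue the $m$ currents into a single one via Demailly's regularized maximum and show that its singular part is precisely $cD$, so that subtracting $cD$ and $\epsilon u_E$ leaves an explicit smooth positive form; the identity $\sum_{i}\widetilde{\mathcal{J}}_i=\mathcal{O}_{\widetilde{X}}$ enters in exactly the same place, guaranteeing that near every point at least one branch $\Psi_{i_0}$ is smooth and finite while the branches with $\Psi_i(p)=-\infty$ tend to $-\infty$ uniformly near $p$ (by continuity of $c\log\sum_j\abs{\tau_{ij}}^2$ into $[-\infty,\infty)$) and are therefore discarded by $\max_\eta$ --- this last point is the one step of your sketch that deserves an explicit line. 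Your route buys a completely explicit K\"ahler form and avoids both the Siu decomposition and \cref{thm-boucksom}, at the cost of the regularized-maximum machinery and the bookkeeping needed to make the potentials globally comparable: each $\sigma^*g_i^*T$ lies in $[\sigma^*g_i^*\alpha]$ rather than $[\sigma^*\alpha]$, so one must first absorb a smooth $\partial\overline{\partial}$-potential of $\sigma^*g_i^*\alpha-\sigma^*\alpha$ into $h_i$ before forming $\max_\eta$, which you do implicitly. Both arguments ultimately rest on the same two facts: $g_i^*T\geq\omega$ for all $i$, and $\sum_i\widetilde{\mathcal{J}}_i=\mathcal{O}_{\widetilde{X}}$.
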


	Let $T_i'\coloneqq \sigma^*g_i^*T$ and $f_{j,i}'\coloneqq f_j\circ g_i\circ \sigma$.
	Then the potential function of $T_i'$ is locally of the form
	\[
		\varphi_i'=c\log \Big(\sum_{j=1}^n \abs{f_{j,i}'}^2\Big)+h\circ g_i\circ \sigma
	\]
	and $(f_{j,i}')_j$ are local generators of $\mathcal{J}'_i$.
	Let $s_i$ be the g.c.d.\ of the $(f_{j,i}')$'s.
	Then we can write down the Siu's decomposition
	\[
		T_i'=R_i+c D_i
	\]
	where $D_i$ is the integration current along the divisor $\{s_i=0\}$
	which can be locally written as $\frac{\sqrt{-1}}{2\pi}\partial\overline{\partial}\log \abs{s_i}^2$
	and $R_i\geq \sigma^*\omega$ such that the Lelong super-level (analytic) set $E_c(R_i)$ has codimension at least $2$
	(cf.~\cite{boucksom2004divisorial}*{Section~2.2.1--2.2.2} and \cite{boucksom2002volume}*{Section~2.2}).
	Note that $s$ is a factor of $s_i$ since $\mathcal{J}'_i\subseteq \sigma^{-1}\mathcal{J}\cdot\mathcal{O}_{\widetilde{X}}$.
	Then we have $D_i\geq D$ and hence
	\[
		T'_i-c D\geq R_i\geq \sigma^*\omega.
	\]

	We now construct
	\[
		\widetilde{T}_i\coloneqq T_{i}' - cD - \epsilon u_E=(\sigma^*g_{i}^{*}\alpha-\epsilon u_E)+\frac{\sqrt{-1}}{2\pi}\partial \overline{\partial}\varphi_{i}'-cD\in [\widetilde{\alpha}]
	\]
	with $\epsilon>0$ sufficiently small.
	By the construction, the potential function of $\widetilde{T}_i$ can be locally written as
	\begin{align*}
		\widetilde{\varphi}_i & = c\log \Big(\sum_{j=1}^n \abs{f_{j,i}'}^2\Big) + h\circ g_i\circ \sigma-c\log\abs{s}^2 \\
		                      & = c\log \Big(\sum_{j=1}^n \abs{\widetilde{f}_{j,i}}^2\Big)+h\circ g_i\circ\sigma
	\end{align*}
	where $f_{j,i}'=\widetilde{f}_{j,i}\cdot s$.
	Note that
	\[
		\widetilde{T}_i\geq \sigma^*\omega-\epsilon u_E
	\]
	while the right-hand side is positive definite by the construction before \cref{claim}.
	Therefore, $\widetilde{T}_i$ is also a K\"ahler current.

	Note that $\widetilde{\mathcal{J}}_i$ is locally generated by $(\widetilde{f}_{j,i})_j$.
	Then
	\[
		\bigcap_{i=1}^m \Sing(\widetilde{T}_i)=\bigcap_{i=1}^m V(\widetilde{\mathcal{J}}_i)=V\Big(\sum_{i=1}^m \widetilde{\mathcal{J}}_i\Big)=\emptyset
	\]
	where $V(-)$ is the zeros of the ideal sheaf.
	In particular, $E_{nK}(\widetilde{\alpha})=\emptyset$ and the claim is proved by \cref{thm-boucksom}.

	Finally, note that the induced action of $\Aut_{[\alpha]}(X)$ on $\widetilde{X}$ is holomorphic by \cref{lem-hol-lift}.
\end{proof}

\section{Boundedness on group components}

In this section, we are going to prove \cref{main-cor-components,main-cor-abelian}.
Our \cref{main-cor-components} plays a key role in the reduction to the K\"ahler case.

Recall that Lieberman~\cite{lieberman1978compactness}*{Proposition 2.2} and Fujiki~\cite{fujiki1978automorphism}*{Theorem 4.8} showed separately that
\[
	[\Aut_{[\alpha]}(X):\Aut_0(X)]<\infty
\]
for a compact K\"ahler manifold $X$ and a K\"ahler form $\alpha$.
For the convenience of our proof for \cref{main-cor-components},
we give a version on a big class generalized by Dinh, Hu and Zhang;
see \cite{dinh2015compact}*{Theorem~2.1} for a more generalized setting.
Note however their proof still requires the existence of a K\"ahler form.
We refer to \cite{meng2018building}*{Propositions~2.9 and 3.6} for a generalized explanation by cone analysis and linear algebra.

\begin{theorem}\label{thm-dhz}
	Let $X$ be a compact K\"ahler manifold.
	Then
	\[
		[\Aut_{[\alpha]}(X):\Aut_0(X)]<\infty
	\]
	for any big $(1,1)$-class $[\alpha]$.
\end{theorem}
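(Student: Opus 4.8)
The plan is to deduce this from the classical Lieberman--Fujiki theorem for K\"ahler classes (\cite{lieberman1978compactness}*{Proposition~2.2}, \cite{fujiki1978automorphism}*{Theorem~4.8}) by manufacturing an $\Aut_{[\alpha]}(X)$-invariant \emph{K\"ahler} class. Write $G\coloneqq\Aut_{[\alpha]}(X)$, and note $\Aut_0(X)\le G$ since $\Aut_0(X)$ acts trivially on $H^2(X,\mathbb{Z})$. If I can produce a K\"ahler class $[\omega']\in H^{1,1}(X,\mathbb{R})$ with $g^*[\omega']=[\omega']$ for all $g\in G$, then $\Aut_0(X)\le G\le\Aut_{[\omega']}(X)$, and the cited K\"ahler case gives $[\Aut_{[\omega']}(X):\Aut_0(X)]<\infty$, whence $[G:\Aut_0(X)]<\infty$. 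So the problem reduces entirely to constructing such an invariant K\"ahler class.

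The heart of the argument is that the induced action of $G$ on $H^{1,1}(X,\mathbb{R})$ is relatively compact. Each $g\in\Aut(X)$ pulls back closed positive currents to closed positive currents, so $g^*$ preserves the pseudoeffective cone $\mathcal{E}\subset H^{1,1}(X,\mathbb{R})$; as $g^{-1}$ does too, in fact $w\in\mathcal{E}\iff g^*w\in\mathcal{E}$. The cone $\mathcal{E}$ is closed, convex, and \emph{salient} (any nonzero pseudoeffective class $\beta$ has positive mass $\int_X\beta\wedge\omega^{n-1}>0$ for a K\"ahler form $\omega$, where $n=\dim X$, so $\mathcal{E}$ contains no line), and the big class $[\alpha]$ lies in its interior. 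Fixing $[\alpha]$, I can therefore define the order-unit norm $\|w\|_{[\alpha]}\coloneqq\inf\{t>0:\ t[\alpha]\pm w\in\mathcal{E}\}$ on $H^{1,1}(X,\mathbb{R})$; salience gives definiteness and $[\alpha]\in\operatorname{int}\mathcal{E}$ gives finiteness, so this is a genuine norm. Since every $g\in G$ satisfies both $g^*\mathcal{E}=\mathcal{E}$ and $g^*[\alpha]=[\alpha]$, it preserves this norm, so $\{g^*|_{H^{1,1}}:g\in G\}$ lies in the compact isometry group of $\|\cdot\|_{[\alpha]}$. This is the cone-analysis input recorded in \cite{meng2018building}*{Propositions~2.9 and~3.6}.

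It remains to average. Let $K$ be the closure of $\{g^*|_{H^{1,1}(X,\mathbb{R})}:g\in G\}$ in $\GL(H^{1,1}(X,\mathbb{R}))$; by the previous paragraph $K$ is a compact subgroup, and each of its elements preserves the open convex K\"ahler cone $\mathcal{K}$, because automorphisms preserve $\mathcal{K}$ and preservation of the closed cone $\overline{\mathcal{K}}$ passes to invertible limits. Choosing any K\"ahler class $[\omega]$ and setting $[\omega']\coloneqq\int_K h[\omega]\,dh$ for the normalized Haar measure $dh$, the class $[\omega']$ is $K$-invariant, hence $G$-invariant; being the barycenter of the compact family $K\cdot[\omega]$ lying in the open convex cone $\mathcal{K}$, it again lies in $\mathcal{K}$ and so is K\"ahler. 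This supplies the desired $G$-invariant K\"ahler class and finishes the reduction.

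The main obstacle is the relative compactness in the middle step: one must select the correct cone, namely the globally salient pseudoeffective cone $\mathcal{E}$ rather than the merely boundary-degenerate nef cone, and verify that the big class $[\alpha]$ genuinely lies in its interior, so that the order-unit norm is nondegenerate. Once boundedness of the cohomological action is in hand, the Haar averaging and the appeal to the K\"ahler-class theorem are routine. I would take particular care to confirm salience of $\mathcal{E}$ for an arbitrary compact K\"ahler $X$, as this is exactly what guarantees that $\|\cdot\|_{[\alpha]}$ is a norm.
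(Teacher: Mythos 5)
Your proof is correct. The paper does not actually prove \cref{thm-dhz} itself --- it quotes it from Dinh--Hu--Zhang and points to \cite{meng2018building}*{Propositions~2.9 and 3.6} for the ``cone analysis'' --- and your middle step is precisely that input: the pseudoeffective cone $\mathcal{E}$ is closed, convex and salient (positivity of the trace mass $\int_X \beta\wedge\omega^{n-1}$ for $0\neq\beta\in\mathcal{E}$ does rule out lines), a big class is an interior point of $\mathcal{E}$, so the order-unit norm is a genuine $\Aut_{[\alpha]}(X)$-invariant norm and the action on $H^{1,1}(X,\mathbb{R})$ is relatively compact. Where you diverge from the usual write-up is the concluding step: the cited arguments typically deduce from boundedness that the image of $\Aut_{[\alpha]}(X)$ in $\GL(H^{2}(X,\mathbb{Z}))$ is finite (a bounded discrete group is finite) and then apply Lieberman--Fujiki to the kernel, which fixes every K\"ahler class; you instead average a K\"ahler class over the compact closure $K$ of the cohomological image and apply Lieberman--Fujiki to the resulting invariant K\"ahler class. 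Your route has the small advantage of needing boundedness only on $H^{1,1}$ rather than on all of $H^{2}$. Two points in your averaging step deserve to be made explicit: the orbit $K\cdot[\omega]$ lies in the \emph{open} cone $\mathcal{K}$ because $K$ is a group, so each $h\in K$ and its inverse preserve $\overline{\mathcal{K}}$, whence $h(\overline{\mathcal{K}})=\overline{\mathcal{K}}$ and $h$ maps $\operatorname{int}\overline{\mathcal{K}}=\mathcal{K}$ onto itself; and the barycenter $[\omega']$ then lies in the convex hull of a compact subset of the open convex cone $\mathcal{K}$, hence in $\mathcal{K}$. With those clarifications the argument is complete.
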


We need the following lemma concerning the descending of
connected complex Lie group action via a bimeromorphic holomorphic map.

\begin{lemma}\label{lem-rig}
	Let $\sigma \colon X'\to X$ be a bimeromorphic holomorphic map of normal compact complex spaces.
	Then the map
	\[
		\tau\colon \Aut_0(X')\to \Aut_0(X),\quad g\mapsto \sigma\circ g\circ\sigma^{-1}
	\]
	is an injective complex Lie group homomorphism.
\end{lemma}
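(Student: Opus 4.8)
The map $\tau$ is formally multiplicative: inside the group $\Bir(X)$ of bimeromorphic self-maps one has $(\sigma g_1\sigma^{-1})\circ(\sigma g_2\sigma^{-1})=\sigma(g_1g_2)\sigma^{-1}$, so $\tau$ is at least a homomorphism into $\Bir(X)$. The real content of the lemma is therefore twofold: that each $\tau(g)$ is an honest biholomorphism of $X$ lying in $\Aut_0(X)$ (not merely a bimeromorphic self-map), and that the resulting map is an injective morphism of complex Lie groups. The plan is to obtain well-definedness by \emph{descending} the holomorphic $\Aut_0(X')$-action along $\sigma$, and to dispatch the remaining assertions by soft arguments.

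First I would record the geometry of $\sigma$. Since $X'$ is compact, $\sigma$ is proper; since $\sigma$ is bimeromorphic and $X$ is normal, the analytic form of Zariski's main theorem gives that $\sigma$ has connected fibres and $\sigma_*\OO_{X'}=\OO_X$. Granting the fibre-preservation established in the next paragraph, the descent produces a holomorphic map $h\colon G\times X\to X$ with $G=\Aut_0(X')$, which is a holomorphic $G$-action on $X$ for which $\sigma$ is equivariant. The automorphism $h(g,-)$ then agrees with $\sigma\circ g\circ\sigma^{-1}$ on the dense open locus where $\sigma$ is an isomorphism, and both being holomorphic they coincide on all of $X$. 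Hence $\tau(g)=\sigma\circ g\circ\sigma^{-1}\in\Aut(X)$; the assignment $\tau$ is exactly the holomorphic $G$-action just constructed, so it is a morphism of complex Lie groups; and since $G$ is connected, its image lies in $\Aut_0(X)$.

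I expect the heart of the proof to be precisely this fibre-preservation: every $g\in\Aut_0(X')$ carries each fibre of $\sigma$ onto a fibre of $\sigma$. I would deduce it from Mumford's rigidity lemma (the complex-analytic version, essentially Blanchard's lemma) applied to $\rho\coloneqq\id_G\times\sigma\colon G\times X'\to G\times X$ together with the holomorphic map $f\colon G\times X'\to X$, $(g,x')\mapsto\sigma(g\cdot x')$. Here $\rho$ is proper with connected fibres and $\rho_*\OO_{G\times X'}=\OO_{G\times X}$ (inherited from $\sigma_*\OO_{X'}=\OO_X$), the base $G\times X$ is connected, and $f$ contracts the fibre $\rho^{-1}(e,x_0)=\{e\}\times\sigma^{-1}(x_0)$ to the single point $x_0$. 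Rigidity then forces $f$ to factor as $f=h\circ\rho$ for a holomorphic $h\colon G\times X\to X$; concretely $\sigma(g\cdot x')$ depends only on $(g,\sigma(x'))$, so $\sigma$ sends the $g$-translate of each fibre to a single point and $h$ is the desired descended action. This is the one place where normality (through $\sigma_*\OO_{X'}=\OO_X$) and connectedness of $G$ are used essentially, and it is exactly where the conclusion can fail if connectedness is dropped.

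Injectivity is then formal. If $\tau(g)=\id_X$, then $\sigma\circ g=\sigma$ as holomorphic maps $X'\to X$, so $g=\id$ on the dense open set $\sigma^{-1}(U)$ lying over the isomorphism locus $U\subseteq X$. The coincidence locus $\{g=\id\}$ is the preimage of the diagonal under $(g,\id)\colon X'\to X'\times X'$, hence closed in the reduced space $X'$; containing a dense open subset, it equals $X'$, so $g=\id_{X'}$ and $\ker\tau$ is trivial.
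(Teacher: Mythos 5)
Your argument follows the same route as the paper's, which simply records $\sigma_*\OO_{X'}=\OO_X$ and the connectedness of $\Aut_0(X')$ and then cites the rigidity lemma (Akhiezer, \S~2.4, Lemmas~1 and 2); your write-up supplies the details behind that citation, and the well-definedness, homomorphism and injectivity parts are all fine. One step is stated too strongly, though: the rigidity lemma does \emph{not} give a global factorisation $f=h\circ\rho$ merely from connectedness of the base $G\times X$ together with the contraction of the single fibre $\rho^{-1}(e,x_0)$. What rigidity actually gives is that the locus of base points over which $f$ contracts the fibre of $\rho$ is \emph{open} (upper semicontinuity of the fibre dimension of the image of $(\rho,f)$, plus connectedness of the fibres); it need not be closed. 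For instance, take $\rho\colon \mathrm{Bl}_0\mathbb{C}^2\to\mathbb{C}^2$ and $f=\mathrm{id}$: every fibre except the one over the origin is contracted, yet $f$ does not factor through $\rho$. The correct globalisation in your setting is: $f$ contracts every fibre over $\{e\}\times X$ (there $f$ restricts to $\sigma$), so by openness and compactness of $X$ it contracts all fibres over $V\times X$ for some neighbourhood $V$ of $e$ in $G$; hence the set of $g\in G$ carrying fibres of $\sigma$ into fibres of $\sigma$ is an open subgroup of the connected group $G$, and therefore equals $G$. This open-subgroup propagation is precisely the content of Akhiezer's Lemma~2 (the second lemma the paper cites), and once it is inserted the rest of your proof goes through verbatim.
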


\begin{proof}
	Note that $\sigma_*\mathcal{O}_{X'}=\mathcal{O}_X$ and $\Aut_0(X')$ is connected.
	Then the lemma is essentially a corollary of the rigidity lemma (cf.~\cite{akhiezer1995lie}*{\S~2.4, Lemmas~1 and 2}).
	Note that $\tau$ is injective because $\sigma$ is bimeromorphic.
\end{proof}

\begin{theorem}\label{thm-aut0}
	Let $X$ be a normal compact complex space in Fujiki's class $\mathcal{C}$.
	Then there is a K\"ahler model $\sigma \colon \widetilde{X}\to X$ such that the map
	\[
		\tau \colon \Aut_0(\widetilde{X})\to \Aut_0(X)
	\]
	via $\tau(g)\coloneqq \sigma\circ g\circ \sigma^{-1}$ is a complex Lie group isomorphism.
\end{theorem}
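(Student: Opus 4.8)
The plan is to reduce to \cref{main-thm} by first resolving the singularities of $X$ equivariantly, and then to upgrade the injective homomorphism of \cref{lem-rig} to an isomorphism by exhibiting explicit lifts at each stage. Concretely, I would factor the desired model as $\sigma=\pi\circ\sigma'$, where $\pi\colon X'\to X$ is an equivariant resolution producing a compact complex manifold $X'$ bimeromorphic to $X$, and $\sigma'\colon\widetilde X\to X'$ is the K\"ahler model supplied by \cref{main-thm} applied to $X'$. Note that $\tau_2\circ\tau_1(g)=\pi\circ(\sigma'\circ g\circ\sigma'^{-1})\circ\pi^{-1}=\sigma\circ g\circ\sigma^{-1}=\tau(g)$, so it suffices to treat the two factors separately.

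First I would invoke functorial (hence $\Aut(X)$-equivariant) resolution of singularities in the analytic category (cf.~\cite{bierstone1997canonical}, \cite{wlodarczyk2009resolution}) to obtain $\pi\colon X'\to X$ with $X'$ smooth. Functoriality makes the lift $g\mapsto\widehat g$ a group homomorphism $\Aut(X)\to\Aut(X')$ with $\pi\circ\widehat g=g\circ\pi$; by \cref{lem-hol-lift} the restricted action of the connected Lie group $\Aut_0(X)$ on $X'$ is holomorphic, and since its image is connected and contains the identity it lies in $\Aut_0(X')$. Because $X$ lies in Fujiki's class $\mathcal{C}$, so does $X'$, hence $X'$ carries a big $(1,1)$-class $[\alpha]$; as $\Aut_0(X')$ is connected it acts trivially on $H^{1,1}(X',\mathbb{R})$ (using the $\partial\overline{\partial}$-lemma, which holds since $[\alpha]$ is big), so $\Aut_0(X')\subseteq\Aut_{[\alpha]}(X')$. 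Applying \cref{main-thm} to $(X',[\alpha])$ then gives the K\"ahler model $\sigma'\colon\widetilde X\to X'$ through which $\Aut_{[\alpha]}(X')$, and in particular $\Aut_0(X')$, lifts holomorphically.

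Set $\sigma=\pi\circ\sigma'$ and consider the conjugation maps $\tau_1\colon\Aut_0(\widetilde X)\to\Aut_0(X')$ and $\tau_2\colon\Aut_0(X')\to\Aut_0(X)$, so that $\tau=\tau_2\circ\tau_1$. By \cref{lem-rig} each $\tau_i$ is an injective complex Lie group homomorphism, whence $\tau$ is one as well. For surjectivity I would read off the inverses from the two liftings just constructed: for $g\in\Aut_0(X)$ the equivariant lift $\widehat g\in\Aut_0(X')$ satisfies $\tau_2(\widehat g)=\pi\circ\widehat g\circ\pi^{-1}=g$, and for $h\in\Aut_0(X')$ the lift $\widetilde h\in\Aut_0(\widetilde X)$ furnished by \cref{main-thm} satisfies $\tau_1(\widetilde h)=\sigma'\circ\widetilde h\circ\sigma'^{-1}=h$. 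Thus $\tau_1$ and $\tau_2$ are bijective homomorphisms of complex Lie groups, hence isomorphisms, and therefore so is $\tau$.

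The one genuinely delicate point is that these two lifts must be \emph{honest} elements of the identity components and must depend holomorphically on the group parameter; this is exactly where the connectedness of $\Aut_0$, \cref{lem-hol-lift}, and the holomorphicity clause of \cref{main-thm} enter, and it is what promotes the injective homomorphism of \cref{lem-rig} to a surjection. A secondary caveat is the passage from the manifold hypothesis of \cref{main-thm} to the merely normal space $X$, which the equivariant resolution $\pi$ handles; one should also check that $\pi$ and $\sigma'$ meet the normality hypotheses of \cref{lem-rig}, and both do, since $X$ is normal by assumption while $X'$ and $\widetilde X$ are smooth.
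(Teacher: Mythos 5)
Your proposal is correct and follows essentially the same route as the paper: equivariant resolution to reduce to the smooth case, the $\partial\overline{\partial}$-lemma plus connectedness to place $\Aut_0(X')$ inside $\Aut_{[\alpha]}(X')$, \cref{main-thm} for the K\"ahler model, and \cref{lem-hol-lift} together with \cref{lem-rig} to get injectivity, with surjectivity read off from the explicit lifts. The only (cosmetic) difference is that you keep the two stages as a composite $\tau=\tau_2\circ\tau_1$ rather than replacing $X$ by $X'$ outright.
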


\begin{proof}
	Let $\pi\colon X'\to X$ be an $\Aut(X)$-equivariant resolution of singularities (cf.~\cite{bierstone1997canonical}*{Theorem~13.2}).
	It follows from Lemmas \ref{lem-hol-lift} and \ref{lem-rig} that the map
	\[
		\Aut_0(X')\to \Aut_0(X),\quad g\mapsto \pi\circ g\circ\pi^{-1}
	\]
	is a complex Lie group isomorphism since $X$ is normal (see also \cite{fujiki1978automorphism}*{Lemma~2.5}).
	Note that $X'$ is also in Fujiki's class $\mathcal{C}$.
	So we may replace $X$ with $X'$ and assume that $X$ is smooth.

	Since $\Aut_0(X)$ is connected,
	its pullback action on $H^2(X,\mathbb{Z})$ is trivial.
	Note that the $\partial\bar{\partial}$-lemma holds for compact complex manifolds in Fujiki's class $\mathcal{C}$.
	So $H^{1,1}(X,\mathbb{R})$ is a subspace of $H^2(X,\mathbb{R})$ and $\Aut_0(X)$ acts trivially on $H^{1,1}(X,\mathbb{R})$.
	Let $[\alpha]\in H^{1,1}(X,\mathbb{R})$ be a big $(1,1)$-class.
	Note that $\Aut_0(X)\leq \Aut_{[\alpha]}(X)$.

	We take $\sigma$ as in \cref{main-thm}.
	Then the map
	\[
		\phi \colon \Aut_0(X)\times \widetilde{X}\to \widetilde{X}
	\]
	via $(g,\widetilde{x})\mapsto (\sigma^{-1}\circ g\circ \sigma)(\widetilde{x})$
	is well-defined and holomorphic.
	In particular, $\Aut_0(X)$ lifts to a (unique) subgroup of $\Aut_0(\widetilde{X})$.
	By \cref{lem-rig}, the map
	\[
		\tau \colon \Aut_0(\widetilde{X})\to \Aut_0(X)
	\]
	via $\tau(g)\coloneqq \sigma\circ g\circ \sigma^{-1}$ is an injective complex Lie group homomorphism.
	We just see the surjectivity of $\tau$ by the lifting property.
	So $\tau$ is isomorphic.
\end{proof}

\begin{proof}[Proof of \cref{main-cor-components}]
	Let $[\alpha]$ be a big $(1,1)$-class.
	By \cref{main-thm}, there is a K\"ahler model $\sigma\colon\widetilde{X}\to X$ such that $\Aut_{[\alpha]}(X)$ lifts to a group $G\leq \Aut(\widetilde{X})$ via $\sigma$.
	By \cref{thm-aut0}, $\Aut_0(\widetilde{X})\leq G$.
	Note that $G\leq \Aut_{\sigma^*[\alpha]}(\widetilde{X})$ and $\sigma^*[\alpha]$ is still big.
	Since $\widetilde{X}$ is K\"ahler, we have that
	\[
		[\Aut_{\sigma^*[\alpha]}(\widetilde{X}):\Aut_0(\widetilde{X})]<\infty
	\]
	by \cref{thm-dhz}.
	Finally, note that
	\[
		\Aut_{[\alpha]}(X)/\Aut_0(X)\cong G/\Aut_0(\widetilde{X})\leq \Aut_{\sigma^*[\alpha]}(\widetilde{X})/\Aut_0(\widetilde{X}).
	\]
	So the corollary is proved.
\end{proof}

\begin{remark}\label{rem-normal-necessary}
	\Cref{lem-rig} and also \cref{thm-aut0} fail in general if $X$ has non-normal singularities such that $\sigma_*\mathcal{O}_{X'}=\mathcal{O}_X$ does not hold.
	A simple example is by taking $X'=\mathbb{P}^1$, $X=\{y^2z-x^3=0\}$ the cuspidal curve, and $\sigma$ just the normalization of $X$.
	Note that $\Aut(X')=\Aut_0(X')=\PGL(2)$ while $\Aut_0(X)$ is (conjugate to) the subgroup of upper triangular matrices in $\PGL(2)$.
\end{remark}

\section{Torsion group actions}

In this section, we will prove \cref{main-cor-torsion,main-cor-abelian}.
Our \cref{main-thm} plays a key role in the reduction to the connected Lie group action.

The following result holds true for algebraic groups defined over an algebraically closed field of characteristic $0$;
see \cite{javanpeykar2021arithmetic}*{Lemma~5.4}.
The proof there cannot be applied well to the case of connected real Lie groups, e.g., we do not have the Chevalley decomposition.
So we give a fundamental Lie-group theoretical proof for the convenience of the readers.
\begin{lemma}\label{lem-torsion-lie}
	A torsion connected real Lie group $G$ is trivial.
\end{lemma}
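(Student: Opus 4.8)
The plan is to show that a torsion connected real Lie group $G$ must be trivial by reducing to well-understood structural facts about Lie groups. A connected real Lie group is torsion if and only if every element has finite order. I would first observe that the dimension of $G$ as a manifold is the key invariant: if $\dim G = 0$, then $G$ is discrete, and being connected it is trivial, so it suffices to rule out $\dim G \geq 1$.

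The key step is to exploit the exponential map $\exp \colon \mathfrak{g} \to G$, where $\mathfrak{g}$ is the Lie algebra of $G$. Suppose $\dim G \geq 1$. Then $\mathfrak{g} \neq 0$, and for any nonzero $X \in \mathfrak{g}$ the one-parameter subgroup $t \mapsto \exp(tX)$ gives a continuous homomorphism $\BRR \to G$. If every element of $G$ has finite order, then in particular $\exp(X)$ has finite order, say $\exp(nX) = \exp(X)^n$ equals the identity for some $n$. The first thing I would try is to analyze the closure $H$ of the one-parameter subgroup $\{\exp(tX) : t \in \BRR\}$: it is a connected abelian Lie subgroup of $G$, hence isomorphic to $(\BRR^p \times (\BRR/\BZZ)^q)$ for some $p, q \geq 0$. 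A torsion subgroup forces $p = 0$, so $H$ is a compact torus $(\BRR/\BZZ)^q$; but if $q \geq 1$, such a torus contains elements of infinite order (any irrational winding element), contradicting the torsion hypothesis unless $q = 0$ as well. Thus $H$ is trivial, forcing $X = 0$, a contradiction.

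I expect the main obstacle to be handling the case where the one-parameter subgroup is \emph{not} closed, since then its closure is genuinely a torus of dimension possibly larger than one, and I must argue carefully that such a torus cannot be a torsion group. The cleanest route is to invoke the structure theorem for connected abelian Lie groups: every connected abelian real Lie group is isomorphic to $\BRR^p \times (\BRR/\BZZ)^q$, and this product is a torsion group only when $p = q = 0$ (since $\BRR$ is torsion-free and a positive-dimensional torus contains non-torsion elements). Applying this to $\overline{\exp(\BRR X)}$ immediately yields $X = 0$. An alternative and perhaps more self-contained argument avoids closures entirely: if $\exp(X)$ has finite order $n$, then $\exp(X/n)$ generates, upon taking further roots, a nontrivial image, and one can directly produce an element of infinite order inside any nontrivial one-parameter subgroup by a density or winding argument.

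Therefore the proof structure I would carry out is: (i) reduce to showing $\dim G = 0$; (ii) assume $\dim G \geq 1$ and pick $0 \neq X \in \mathfrak{g}$; (iii) consider the closure of the one-parameter subgroup generated by $X$, identify it with $\BRR^p \times (\BRR/\BZZ)^q$, and derive that the torsion hypothesis forces $p = q = 0$; (iv) conclude $X = 0$, a contradiction, so $\dim G = 0$ and $G$ is trivial by connectedness. The hard part is step (iii), specifically the bookkeeping needed to guarantee that a positive-dimensional closed connected abelian subgroup genuinely contributes an infinite-order element.
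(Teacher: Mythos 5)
Your proof is correct, but it takes a genuinely different and more elementary route than the paper. The paper first proves that $G$ is abelian: it considers the adjoint representation $\rho\colon G\to \GL(T_{G,e})$ and rules out a nontrivial image by combining Schur's theorem (finitely generated torsion linear groups are finite) with maximal compact subgroups and the torus theorem; once $G=\Ker\rho$ is abelian, it invokes the structure theorem $G\cong \mathbb{R}^m\times (S^1)^n$ and observes that this is torsion only when $m=n=0$. You instead localize to a single one-parameter subgroup: for $0\neq X\in\mathfrak{g}$ the closure of $\{\exp(tX)\}$ is a closed connected abelian Lie subgroup, hence of the form $\mathbb{R}^p\times(\mathbb{R}/\mathbb{Z})^q$, and the torsion hypothesis forces $p=q=0$, whence $X=0$ --- the same structure theorem the paper uses at the very end, but applied to a subgroup you construct directly rather than to $G$ itself after an abelianization step. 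The step you flag as the ``hard part'' is not actually delicate: you can bypass the closure entirely, since the kernel of $t\mapsto\exp(tX)$ is a closed subgroup of $\mathbb{R}$ (discrete when $X\neq 0$, because $\exp$ is a local diffeomorphism at the identity), so the image is abstractly $\mathbb{R}$ or a circle, and both contain elements of infinite order. Your argument buys brevity and avoids Schur's theorem, maximal compact subgroups, and the torus theorem; the paper's argument establishes the stronger intermediate fact that a torsion connected Lie group is abelian, at the cost of heavier machinery. Both are complete proofs.
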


\begin{proof}
	Consider the adjoint representation
	\[
		\rho\colon G\to \GL(T_{G,e})
	\]
	where $e$ the identity element of $G$ and $T_{G,e}$ is the tangent space of $G$ at $e$.
	Note that $\Ker \rho$ is just the centre of $G$ since $G$ is connected.

	We claim that $\rho(G)$ is trivial.
	Suppose the contrary.
	Then $\rho(G)$ is infinite since it is connected.
	Note that $\rho(G)$ is a torsion connected Lie subgroup of $\GL(T_{G,e})$.
	So any finitely generated subgroup of $\rho(G)$ is finite by Schur's theorem (cf.~\cite{lam2001first}*{Chapter~3, \S~9, Theorem~(9.9)}).
	Then we may choose a sequence of $g_i\in \rho(G)$ such that $g_{n+1}\not\in G_n$ where $G_n$ is the (finite) group generated by $(g_1,\dots, g_n)$.
	Note that the order $|G_n|$ increases strictly.
	Let $M$ be a maximal compact subgroup of $\rho(G)$ which is unique up to conjugation.
	Then $G_n$ can be viewed as a subgroup of $M$ via a conjugation.
	In particular, $M$ is an infinite group.
	Let $M_0$ be the neutral component of $M$ which is also infinite since $M$ is compact.
	Let $T$ be a maximal torus contained in $M_0$.
	Since $T$ is torsion, $T$ is trivial.
	However, every element of $M_0$ is conjugate to an element of $T$ by the torus theorem.
	Then $M_0$ is trivial, a contradiction.
	So the claim is proved.

	Now $\rho(G)$ is trivial and hence $G=\Ker \rho$ is abelian.
	Note that an abelian connected real Lie group $G$ is isomorphic to $\mathbb{R}^m\times (S^1)^n$.
	The latter is easily seen to be non-torsion unless $m=n=0$.
	Therefore, $G$ is trivial.
\end{proof}

For general linear groups over number fields, we may even have boundedness on their torsion subgroups.
\begin{lemma}\label{lem-torsion-finite}
	Let $G$ be a torsion subgroup of $\GL_n(K)$ where $K$ is a number field.
	Then $\abs{G}\leq N$ for some constant $N$ depending only on $n$ and $K$.
\end{lemma}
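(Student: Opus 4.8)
The plan is to prove that a torsion subgroup $G \leq \GL_n(K)$ with $K$ a number field has order bounded by a constant $N = N(n,K)$. The key structural fact is that every element of a torsion subgroup has finite order, and an element $g \in \GL_n(K)$ of finite order is diagonalizable over $\overline{K}$ with eigenvalues that are roots of unity. The eigenvalues are roots of the characteristic polynomial of $g$, which is a monic degree-$n$ polynomial with coefficients in $K$; hence each eigenvalue is an algebraic number of degree at most $n$ over $K$, so of degree at most $n[K:\Q]$ over $\Q$. A root of unity of degree $\leq d$ over $\Q$ has order bounded in terms of $d$ (since the cyclotomic field $\Q(\zeta_N)$ has degree $\varphi(N)$ over $\Q$, and $\varphi(N)\to\infty$). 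This immediately bounds the order of any single element of $G$; the main work is to pass from a bound on the order of each element to a bound on the order of the whole group.

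First I would make precise the bound on individual element orders: there is a constant $e = e(n,K)$ such that every $g \in G$ satisfies $g^{e} = 1$, obtained by taking $e$ to be the least common multiple of all integers $N$ for which $\varphi(N) \leq n[K:\Q]$, this set being finite. Thus $G$ has finite exponent. The crucial next step is to invoke a Jordan-type / Burnside-type finiteness theorem: a torsion subgroup of $\GL_n$ over a field of characteristic $0$ is finite, and in fact has order bounded purely in terms of $n$. Here I would cite Jordan's theorem together with the fact (essentially Minkowski's bound, or Schur's boundedness theorem for linear groups of finite exponent) that a finite subgroup of $\GL_n(K)$ for a number field $K$ has order bounded solely in terms of $n$ and $[K:\Q]$. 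The cleanest route is Schur's theorem (already cited in the proof of \cref{lem-torsion-lie}) guaranteeing that a torsion linear group of bounded exponent over a field of characteristic $0$ is finite, combined with a uniform bound.

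Concretely, I would argue as follows. Since $G$ is a torsion subgroup of $\GL_n(K)$ and $K$ has characteristic $0$, by Schur's theorem (cf.~\cite{lam2001first}*{Chapter~3, \S~9, Theorem~(9.9)}) $G$ is finite. Embedding $K \hookrightarrow \C$ and using that $G$ is finite, $G$ fixes a positive-definite Hermitian inner product (by averaging over $G$), so $G$ is conjugate into the unitary group $U(n)$; this shows $G$ is a finite subgroup of $U(n)$. By Jordan's theorem there is an abelian normal subgroup $A \leq G$ of index bounded by a constant $J(n)$ depending only on $n$. The abelian group $A$ consists of simultaneously diagonalizable elements whose eigenvalues are roots of unity of bounded order $e$, and whose entries lie in $K$ (or a fixed bounded extension), so $|A|$ is bounded in terms of $n$, $K$, and $e$. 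Combining $|G| = [G:A]\cdot|A| \leq J(n)\cdot|A|$ gives the desired uniform bound $N$.

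The main obstacle will be producing the uniform bound on $|A|$ — i.e.\ controlling the abelian part — rather than merely the exponent. Bounding the exponent via cyclotomic degrees is routine, and Schur's theorem gives finiteness, but turning "each eigenvalue is a root of unity of bounded order lying over $K$" into a bound on the size of a commuting family requires care: one must bound how many distinct characters a diagonal torus defined over $K$ can support subject to the entries remaining in $K$. I would handle this by noting that a diagonalizable matrix in $\GL_n(K)$ of finite order is determined up to conjugacy by its eigenvalue multiset, that the eigenvalues lie in the fixed field $K(\zeta_e)$ of degree $\leq n[K:\Q]\cdot\varphi(e)$ over $\Q$, and that the number of matrices in $\GL_n\big(K(\zeta_e)\big)$ of order dividing $e$ is finite and bounded purely in terms of $n$ and $e$; since $e$ itself depends only on $n$ and $K$, this yields the required constant $N$. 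An alternative, perhaps cleaner, finish is to cite directly the classical theorem of Minkowski–Schur giving an explicit bound on the order of finite subgroups of $\GL_n(K)$ for a number field $K$ in terms of $n$ and $K$, which makes the Jordan step unnecessary; I would likely phrase the final proof via that route for brevity.
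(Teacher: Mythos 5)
Your overall strategy is essentially the paper's: first bound the order of every element of $G$ (its eigenvalues are roots of unity whose degree over $\Q$ is at most $n[K:\Q]$, and $\varphi(N)\leq n[K:\Q]$ has only finitely many solutions $N$), which is exactly the content of the Minkowski/Serre bound the paper invokes, and then upgrade the resulting exponent bound $e=e(n,K)$ to a bound on $\abs{G}$. The paper does the second step in one line via Burnside's first theorem (\cite{lam2001first}*{Chapter~3, \S~9, Theorem~(9.4)}): a linear group of degree $n$ in characteristic $0$ with exponent at most $M$ has order at most $M^{n^3}$. Your preferred ``clean finish'' is the same argument in substance.

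Two intermediate claims in your elaborated route are wrong as stated, however. First, Schur's theorem does \emph{not} say that a torsion subgroup of $\GL_n$ over a field of characteristic $0$ is finite; it says it is \emph{locally} finite (every finitely generated subgroup is finite), and indeed $\GL_1(\C)$ contains the infinite torsion group of all roots of unity. Finiteness of $G$ here genuinely needs either the bounded exponent you established together with Burnside's first theorem, or local finiteness combined with a uniform Minkowski bound $N(n,K)$ on \emph{finite} subgroups of $\GL_n(K)$ (if every finitely generated subgroup has order at most $N$, then so does $G$). Second, the set of matrices in $\GL_n\big(K(\zeta_e)\big)$ of order dividing $e$ is not finite once $n\geq 2$: the matrices $\bigl(\begin{smallmatrix}1&0\\ t&-1\end{smallmatrix}\bigr)$ have order $2$ for every $t$. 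The correct way to bound your abelian subgroup $A$ is to note that it consists of commuting semisimple elements of exponent dividing $e$, hence is simultaneously diagonalizable over $\overline{K}$, giving $\abs{A}\leq e^n$. With these two repairs your Jordan-based route does work, but it is strictly longer than the Minkowski--Burnside argument, which, as you anticipated, is the route the paper takes.
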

\begin{proof}
	By the Minkowski's~theorem (cf.~\cite{serre2006bounds}*{Theorem~5, and \S~4.3}), there is a constant $M$ depending only on $n$ and $K$ such that the order $o(g)\leq M$ for any $g\in \GL_n(K)$ with finite order.
	By the Burnside's first theorem (cf.~\cite{lam2001first}*{Chapter~3, \S~9, Theorem~(9.4)}), $\abs{G}\leq N \coloneqq M^{n^3}$ is finite.
\end{proof}

\begin{proof}[Proof of \cref{main-cor-torsion}]
	Let $G$ be a torsion subgroup of $\Aut(X)/\Aut_0(X)$.
	Let $\pi\colon X'\to X$ be an $\Aut(X)$-equivariant resolution of singularities (cf.~\cite{bierstone1997canonical}*{Theorem~13.2}),
	with $\Aut(X)$ lifts to a (unique) subgroup of $\Aut(X')$ via $\pi$.
	Note that $\Aut_0(X)$ lifts (isomorphically) to $\Aut_0(X')$ (cf.~\cite{fujiki1978automorphism}*{Lemma~2.5}).
	Then $G$ also lifts to a (unique) torsion subgroup of $\Aut(X')/\Aut_0(X')$.
	Note that $X'$ is also in Fujiki's class $\mathcal{C}$.
	Therefore, we may replace $X$ with $X'$ and assume that $X$ is smooth.

	Since the pullback action $\Aut_0(X)|_{H^2(X,\mathbb{Q})}$ is trivial, we have the following exact sequence
	\[
		1\longrightarrow G_{\tau}\longrightarrow G\longrightarrow G|_{H^2(X,\mathbb{Q})}\longrightarrow 1
	\]
	where $G_{\tau}$ is the kernel.
	Note that $G|_{H^2(X,\mathbb{Q})}$ is torsion and hence finite with order bounded by some $N$ depending only on $H^2(X,\mathbb{Q})$ (and hence only on $X$) by \cref{lem-torsion-finite}.
	Denote by
	\[
		\Aut_{\tau}(X)\coloneqq \{g\in \Aut(X) \mid g^*|_{H^2(X,\mathbb{Q})}=\id\}.
	\]
	Note that
	\[
		G_{\tau}\leq \Aut_{\tau}(X)/\Aut_0(X)\le \Aut_{[\alpha]}(X)/\Aut_0(X)
	\]
	for any big $(1,1)$-class $[\alpha]$.
	By \cref{main-cor-components}, we have
	\[
		|G_{\tau}|\leq C\coloneqq [\Aut_{\tau}(X):\Aut_0(X)],
	\]
	where $C$ depends only on $X$.
	Then $|G|\le N\cdot C$ and we get an upper bound.

	Finally, if $\Aut(X)$ is torsion, then $\Aut_0(X)$ is trivial by \cref{lem-torsion-lie} and hence $\Aut(X)$ is finite.
\end{proof}

\begin{remark}
	Currently, the normality assumption on $X$ is required in the proof of \cref{main-cor-torsion}.
	The reason is that the equivariant resolution of singularities may enlarge $\Aut_0(X)$;
	see \cref{rem-normal-necessary}.
\end{remark}

It is well known that any torsion subgroup of the general linear group is almost abelian by the Jordan--Schur lemma.
Lee generalized it to the case of connected real Lie groups;
see \cite{lee1976torsion}.

\begin{theorem}\label{thm-lee}
	Let $G$ be a connected real Lie group.
	Then any torsion subgroup $H\leq G$ has an abelian subgroup $H'\leq H$ with
	$[H:H']\leq J$ where $J$ is a constant depending only on $G$.
\end{theorem}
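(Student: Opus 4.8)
The plan is to reduce the theorem to the classical Jordan--Schur theorem for torsion subgroups of a linear group, by first showing that every torsion subgroup $H\le G$ is \emph{relatively compact} and may therefore be conjugated into a fixed maximal compact subgroup of $G$. The main obstacle will be precisely this relative compactness; granting it, the rest is soft.

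Suppose first that we knew $\overline{H}$ is compact. By the Cartan--Iwasawa--Malcev theorem, $G$ admits a maximal compact subgroup $K$, unique up to conjugacy, into which every compact subgroup of $G$ can be conjugated; so after replacing $H$ by a conjugate we may assume $H\le K$. By the Peter--Weyl theorem, $K$ has a faithful finite-dimensional unitary representation $K\hookrightarrow \mathrm{U}(N)\subseteq \GL_N(\C)$, where $N$ depends only on $K$, hence only on $G$. Then $H$ is a torsion subgroup of $\GL_N(\C)$, and the Jordan--Schur theorem furnishes an abelian subgroup $H'\le H$ with $[H:H']\le J$, where $J=J(N)$ depends only on $N$ and hence only on $G$. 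This is exactly the assertion of the theorem.

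It remains to prove that $\overline{H}$ is compact, and here I would proceed through the adjoint representation, as in the proof of \cref{lem-torsion-lie}. Put $n=\dim G$ and consider $\operatorname{Ad}\colon G\to \GL(\mathfrak{g})=\GL_n(\R)$, whose kernel is the centre $Z\coloneqq Z(G)$ since $G$ is connected. The image $\operatorname{Ad}(H)$ is torsion, hence locally finite by Schur's theorem (already used in \cref{lem-torsion-lie}), and Jordan's theorem gives a finite-index abelian subgroup; an abelian torsion subgroup of $\GL_n(\C)$ consists of commuting finite-order, hence semisimple, elements, which are simultaneously diagonalizable with roots-of-unity eigenvalues and are therefore conjugate into the diagonal compact torus. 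Thus $\operatorname{Ad}(H)$ is relatively compact, and I set $C\coloneqq \overline{\operatorname{Ad}(H)}$, a compact subgroup of $\GL_n(\R)$.

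\textbf{The hard part} is to pass from $C$ back to $\overline{H}$, that is, to control the centre. Since $\operatorname{Ad}(\overline{H})\subseteq C$ is relatively compact, $\overline{H}$ will be compact once the closed central subgroup $\overline{H}\cap Z$ is shown to be compact and no sequence in $H$ escapes to infinity along $Z$. This is where the torsion hypothesis on $H$ (and not merely local finiteness of $\operatorname{Ad}(H)$) is essential: writing $Z_0\cong \R^a\times (S^1)^b$ for the identity component, every torsion element of $Z$ has trivial $\R^{a}$-component and so lies in the maximal compact subgroup $\{0\}\times (S^1)^b$ of $Z_0$ up to the component group $Z/Z_0$, which confines $H\cap Z$ to a relatively compact set. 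Combining this with the relative compactness of $\operatorname{Ad}(H)$, one rules out escape of mass into the noncompact directions of $G$ and concludes that $\overline{H}$ is compact. Making this no-escape argument fully rigorous for an arbitrary connected Lie group---whose centre and its component group may both be noncompact, and which need not be linear---is the genuine technical heart of the statement, and is carried out by Lee in \cite{lee1976torsion}. With $\overline{H}$ compact in hand, the reduction in the second paragraph completes the proof.
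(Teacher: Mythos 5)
The paper does not prove this theorem at all: it is quoted as a black-box result of Lee, with the citation \cite{lee1976torsion} standing in for the entire argument. So there is no in-paper proof to compare yours against; the relevant comparison is whether your sketch is a correct and self-contained substitute for that citation, and it is not quite. Your reduction is the standard and correct one: \emph{if} $\overline{H}$ is compact, then Cartan--Iwasawa--Malcev conjugates $H$ into a fixed maximal compact subgroup $K$, Peter--Weyl embeds $K$ faithfully into some $\mathrm{U}(N)$ with $N$ depending only on $G$, and the Jordan--Schur theorem for torsion linear groups yields the abelian subgroup of index bounded by $J(N)$. Your argument that $\operatorname{Ad}(H)$ is relatively compact (locally finite by Schur, abelian-by-finite by Jordan, with the abelian part simultaneously diagonalizable with root-of-unity eigenvalues) is also sound.

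The genuine gap is the one you yourself flag: the passage from relative compactness of $\operatorname{Ad}(H)$ to compactness of $\overline{H}$, i.e.\ controlling the central kernel of the adjoint representation and ruling out escape along noncompact central directions. Your paragraph on this point gestures at the right ingredients (the structure $\R^a\times(S^1)^b$ of $Z(G)_0$, the component group $Z(G)/Z(G)_0$) but does not actually close the argument --- for instance, one still needs that the torsion of $Z(G)/Z(G)_0$ is uniformly controlled, and that the extension $1\to \overline{H}\cap Z \to \overline{H}\to \operatorname{Ad}(\overline{H})$ of compact groups forces $\overline{H}$ compact. Since you explicitly outsource exactly this step to \cite{lee1976torsion}, your proposal is in the end a (more informative) citation, on the same footing as the paper's own treatment rather than an independent proof. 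If you want a self-contained argument, the missing lemma to establish is: every torsion subgroup of a connected Lie group has compact closure; everything else in your write-up is routine once that is in hand.
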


\begin{proof}[Proof of \cref{main-cor-abelian}]
	First, by taking an $\Aut(X)$-equivariant resolution of singularities (cf.~\cite{bierstone1997canonical}*{Theorem~13.2}) which is still in Fujiki's class $\mathcal{C}$, 
	we may assume $X$ is smooth.

	Let $G\leq \Aut(X)$ be a torsion subgroup.
	Denote by
	\[
		\Aut_{\tau}(X)\coloneqq \{ g\in \Aut(X) \mid g^*|_{H^2(X,\mathbb{Q})}=\id\}.
	\]
	Note that $G/G\cap \Aut_{\tau}(X)$ can be viewed as a torsion subgroup of $\GL(H^2(X,\mathbb{Q}))$.
	By \cref{lem-torsion-finite}, we have
	\[
		[G:G\cap \Aut_{\tau}(X)]\leq N
	\]
	for some constant $N$ depending only on $H^2(X,\mathbb{Q})$ (and hence only on $X$).

	Since $\Aut_{\tau}(X)\subseteq\Aut_{[\alpha]}(X)$ and by \cref{main-cor-components},
	we see that
	\[
		C\coloneqq [\Aut_{\tau}(X):\Aut_0(X)]<\infty
	\]
	and hence
	\[
		[G:G\cap \Aut_0(X)]\leq N\cdot C.
	\]
	By \cref{thm-lee}, there is an abelian subgroup $H\leq G\cap \Aut_0(X)$ such that
	\[
		[G\cap \Aut_0(X):H]\leq J_0
	\]
	where $J_0$ is a constant depending only on $\Aut_0(X)$ (and hence only on $X$).

	Together, we have
	\[
		[G:H]\leq J\coloneqq N\cdot C\cdot J_0
	\]
	as desired.
\end{proof}

\begin{bibdiv}
	\begin{biblist}

		\bib{akhiezer1995lie}{book}{
			author={Akhiezer, Dmitri~N.},
			editor={Diederich, Klas},
			title={Lie group actions in complex analysis},
			series={Aspects of {{Mathematics}}},
			publisher={Friedr. Vieweg \& Sohn, Braunschweig},
			address={{Wiesbaden}},
			date={1995},
			volume={27},
			pages={viii+201},
			isbn={978-3-322-80269-9 978-3-322-80267-5},
		}

		\bib{bierstone1997canonical}{article}{
			author={Bierstone, Edward},
			author={Milman, Pierre~D.},
			title={Canonical desingularization in characteristic zero by blowing up the maximum strata of a local invariant},
			date={1997},
			ISSN={0020-9910, 1432-1297},
			journal={Inventiones Mathematicae},
			volume={128},
			number={2},
			pages={207\ndash 302},
		}

		\bib{boucksom2002volume}{article}{
			author={Boucksom, S{\'e}bastien},
			title={On the volume of a line bundle},
			date={2002},
			ISSN={0129-167X, 1793-6519},
			journal={Int. J. Math.},
			volume={13},
			number={10},
			pages={1043\ndash 1063},
		}

		\bib{boucksom2004divisorial}{article}{
			author={Boucksom, S{\'e}bastien},
			title={Divisorial {{Zariski}} decompositions on compact complex manifolds},
			date={2004},
			ISSN={00129593},
			journal={Ann. Sci. \'Ecole Norm. Sup. (4)},
			volume={37},
			number={1},
			pages={45\ndash 76},
		}

		\bib{demailly1992regularization}{article}{
			author={Demailly, Jean-Pierre},
			title={Regularization of closed positive currents and intersection theory},
			date={1992},
			ISSN={2300-7443},
			journal={J. Algebraic Geom.},
			number={1},
			pages={361\ndash 409},
		}

		\bib{demailly1997complex}{book}{
			author={Demailly, Jean-Pierre},
			title={Complex analytic and differential geometry},
			publisher={{Universit\'e de Grenoble I}},
			date={1997},
		}

		\bib{deligne1975real}{article}{
			author={Deligne, Pierre},
			author={Griffiths, Phillip},
			author={Morgan, John},
			author={Sullivan, Dennis},
			title={Real homotopy theory of k\"ahler manifolds},
			date={1975},
			journal={Invent. Math.},
			volume={29},
			pages={245\ndash 274},
		}

		\bib{dinh2015compact}{article}{
			author={Dinh, Tien-Cuong},
			author={Hu, Fei},
			author={Zhang, De-Qi},
			title={Compact {{K\"ahler}} manifolds admitting large solvable groups of automorphisms},
			date={2015},
			ISSN={00018708},
			journal={Advances in Mathematics},
			volume={281},
			pages={333\ndash 352},
		}

		\bib{douady1966probleme}{article}{
			author={Douady, Adrien},
			title={{Le probl\`eme des modules pour les sous-espaces analytiques compacts d'un espace analytique donn\'e}},
			date={1966},
			ISSN={0373-0956},
			journal={Ann. Inst. Fourier (Grenoble)},
			volume={16},
			number={1},
			pages={1\ndash 95},
		}

		\bib{demailly2004numerical}{article}{
			author={Demailly, Jean-Pierre},
			author={Paun, Mihai},
			title={Numerical characterization of the {{K\"ahler}} cone of a compact {{K\"ahler}} manifold},
			date={2004},
			ISSN={0003-486X},
			journal={Ann. Math.},
			volume={159},
			number={3},
			pages={1247\ndash 1274},
		}

		\bib{fujiki1978automorphism}{article}{
			author={Fujiki, Akira},
			title={On automorphism groups of compact {{K\"ahler}} manifolds},
			date={1978},
			journal={Invent. Math.},
			volume={44},
			pages={225\ndash 258},
		}

		\bib{javanpeykar2021arithmetic}{article}{
			author={Javanpeykar, Ariyan},
			title={Arithmetic hyperbolicity: automorphisms and persistence},
			date={2021},
			ISSN={0025-5831, 1432-1807},
			journal={Math. Ann.},
			volume={381},
			number={1-2},
			pages={439\ndash 457},
		}

		\bib{lam2001first}{book}{
			author={Lam, Tsit-Yuen},
			title={A first course in noncommutative rings},
			series={Graduate {{Texts}} in {{Mathematics}}},
			publisher={{Springer}},
			address={{New York}},
			date={2001},
			volume={131},
			ISBN={978-0-387-95325-0 978-1-4419-8616-0},
		}

		\bib{lee1976torsion}{article}{
			author={Lee, Dong~Hoon},
			title={On torsion subgroups of {{Lie}} groups},
			date={1976},
			journal={Proc. Amer. Math. Soc.},
			volume={55},
			pages={3},
		}

		\bib{lieberman1978compactness}{incollection}{
			author={Lieberman, David~I.},
			title={Compactness of the {{Chow}} scheme: applications to automorphisms and deformations of {{Kahler}} manifolds},
			date={1978},
			booktitle={Fonctions de {{Plusieurs Variables Complexes III}}},
			editor={Norguet, Fran{\c c}ois},
			volume={670},
			publisher={{Springer}},
			address={{Berlin, Heidelberg}},
			pages={140\ndash 186},
		}

		\bib{meng2020jordan}{article}{
			author={Meng, Sheng},
			author={Perroni, Fabio},
			author={Zhang, De-Qi},
			title={Jordan property for automorphism groups of compact spaces in {{Fujiki}}'s class {{C}}},
			date={2020},
			journal={arXiv},
			eprint={2011.09381},
		}

		\bib{meng2018building}{article}{
			author={Meng, Sheng},
			author={Zhang, De-Qi},
			title={Building blocks of polarized endomorphisms of normal projective varieties},
			date={2018},
			volume={325},
			journal={Adv. Math.},
			pages={243\ndash 273},
		}

		\bib{serre2006bounds}{incollection}{
			author={Serre, Jean-Pierre},
			title={Bounds for the orders of the finite subgroups of {{G}}(k)},
			date={2006},
			booktitle={Group representation theory},
			publisher={{EPFL Press}},
			address={{Lausanne}},
			pages={405\ndash 450},
		}

		\bib{varouchas1989kahler}{article}{
			author={Varouchas, Jean},
			title={K\"ahler spaces and proper open morphisms},
			date={1989},
			ISSN={0025-5831, 1432-1807},
			journal={Math. Ann.},
			volume={283},
			number={1},
			pages={13\ndash 52},
		}

		\bib{wlodarczyk2009resolution}{inproceedings}{
			author={Wlodarczyk, Jaroslaw},
			title={Resolution of singularities of analytic spaces},
			date={2009},
			booktitle={Proceedings of {{G\"okova Geometry-Topology Conference}}},
			pages={31\ndash 63},
		}

	\end{biblist}
\end{bibdiv}

\end{document}